\titleformat{\section}[hang]%
{\bfseries\large}{\thesection.}{1ex}{}%
\titleformat{\subsection}[hang]%
{\bfseries}{\thesubsection}{1ex}{}%
\theoremstyle{plain}
\newtheorem{theorem}{Theorem}[section]
\newtheorem{proposition}[theorem]{Proposition}
\newtheorem{corollary}[theorem]{Corollary}
\newtheorem{definition}[theorem]{Definition} 
\theoremstyle{definition}
\newtheorem{example}[theorem]{Example}
\newtheorem{remark}[theorem]{Remark}
\title{\vskip 5pt  \bf  A simplified categorical approach to several Galois theories}
\author{\itshape{D. Bl\'azquez-Sanz, C. A. Mar\'{\i}n Arango \& J. F. Ruiz Castrillon}}
\date{}
\begin{document}
\maketitle

%Nota bene:  the next two commands erase pages numbers (the right page numbers will be add by the editors on the pdf) 
%\cfoot{}
\thispagestyle{empty}
\vskip 25pt
% Abstract in French and in English, followed by Keywords and MSC:
\begin{adjustwidth}{0.5cm}{0.5cm}
{\small
{\bf R\'esum\'e.} Nous \'etudions le concept de structure de Galois et epimorphisme de Galois dans un contexte g\'en\'eral. Notamment, une structure de Galois pour un \'epimorphisme $\pi\colon M \rightarrow B$ dans une cat\'egorie $\mathcal C$ %. Il s'agit de 
est l'action d'un groupe objet qui munit $M$  d'une structure d'espace homog\`ene dans la cat\'egorie relative ${\mathcal C}_B$. %Nous verrons que ces concepts g\'en\'eraux s'appliquent pour les recouvrements ; les extensions finies des corps ; les extensions fortement normales des corps différentiels ; etc. De plus, nous allons explorer des structures de Galois dans la cat\'egorie des vari\'et\'es feuillet\'ees, en arrivant \`a un \'equivalent purement g\'eom\'etrique et lisse de la th\'eorie de Galois diff\'erentielle.
\\
{\bf Abstract.} We discuss the concept of Galois structure and Galois epimorphism in a general setting. Namely, a Galois structure for an epimorphism $\pi\colon M\to B$ in some category ${\mathcal C}$ is the action of a group object that gives to $M$ the structure of principal homogeneous space in the relative category ${\mathcal C}_B$. %We see that this general setting applies to coverings, finite field extensions, strongly normal extensions of differential fields, etc. We also explore Galois structures in the category of foliated manifolds, arriving to a purely geometric and smooth counterpart of differential Galois theory. \\
{\bf Keywords.} Galois theory, Differential algebra, Foliation, Groupoid, Principal bundle.\\
{\bf Mathematics Subject Classification (2010).} 18B40, 57M10, 12F10, 12H05, 53C12.
}
\end{adjustwidth}

% Here starts the text:.
\section{Introduction}

From its very starting point in the theory of polynomial equations with one variable \cite{Galois}, Galois theory proposes a systematic use of the principal homogeneous structure of the space of solutions of an equation. This idea was systematically applied by E. Vessiot \cite{Vessiot1904} in his general approach to differential Galois theory. Today there are several Galois theories, with different domains of application. 

\smallskip

It is clear that there is some common mathematical core within all these theories. This is usually explained through analogy. Most texts dedicated to several Galois theories develop them separately, establish some bridges, and point out these analogies between them, as in the book of R. and D. Douady  \cite{Douady2005}. 

\smallskip

There is a categorical approach to Galois theory initiated by Grothendieck (\cite{SGA1}, see \cite{Dubuc2000} for a more accessible exposition) and continued in \cite{Artin1972}  (see also \cite{JOHNSTONE1977}  ). This theory is further developed by Dubuc \cite{DUBUC2003}  and culminated by Joyal-Tierney \cite{Joyal1984}. A different approach to Galois theory is considered by G. Janelidze and F. Bourceux (\cite{Janelidze1990}, see also \cite{Borceux}, chapter 5). This categorical Galois theory does not cover some natural incarnations of Galois theory, as differential Galois theory \cite{SingerVanderPut2003}. The main difference between Grothedieck approach and ours is the following: we do not see the Galois group as a set-theoretical group acting on an object but as a group object of the category. This line of thinking is inspired by some facts of differential Galois theory. For instance, the Galois group of a strongly normal extension \cite{Kolchin1973} is an algebraic group defined over the constants, which can be seen as a particular kind a group object in the category of differential algebraic varieties. Some years ago A. Pillay generalized E. Kolchin's theory of strongly normal extensions \cite{Pillay1998}. A generalized strongly normal extension is a differential field extension whose group of automorphisms admits a natural structure of differential algebraic group, that is, a group object in the category of differential algebraic varieties.

\smallskip

%The ideas we present here were already available, sometimes in hidden form, in specific contexts: automorphic systems \cite{Vessiot1904}, Galoisian categories and faithfully flat descent \cite{SGA1}, the presentation of classical Galois theory as in the book of C. Sancho and P. Sancho \cite{SanchoSancho2013}, Galois theory of rings \cite{Villamayor1966,Magid1971}, Galois theory for inseparable extensions \cite{Chase1976},
%theory of $V$-primitive extensions \cite{Kolchin1973}, geometric characterization of strongly normal extensions in \cite{Kovacic2003,Kovacic2006}, the principal homogeneous structure over a definable group in generalized strongly normal extensions of A. Pillay \cite{Pillay1998}, etc.
%\smallskip

Our framework also explains how some Galois theories are naturally extended. Most of them allow Galois structures (Definition \ref{df:Galois_st}) with Galois groups in some specific class of group objects in a category. By modifying the category, or by extending the class of possible Galois groups we obtain different extensions of Galois theory. For instance, classical Galois theory extends to Hopf-Galois theory by allowing a broader class of group objects. 
%Something similar happens for Picard-Vessiot theory, strongly normal extensions and generalized strongly normal extensions.  

\smallskip

We give some examples of how the proposed general definitions apply to the cases of classical Galois theory (algebraic and topological), and differential Galois theory. Then we explore the category of foliated smooth manifolds. Epimorphims in such category are partial Ehresmann conections. When examining Galois structures there naturally appear $G$-invariant connections. This is not surprising, $G$-invariant connections were in fact introduced in the context of Galois theory by E. Vessiot in the beggining to 20th century: they are the so-called automorphic systems appearing in \cite{Vessiot1904}. We prove uniqueness of the Galois group for the irreducible case, Theorem \ref{smooth}. Finally we compare the real smooth and the complex algebraic cases.

%\smallskip
%As our examples cover algebra, topology, differential algebra, and differential geometry, the text is not self contained and we do not include all definitions. We use the standard notation in each context, and we also suggest some reference where it is indispensable.

\section{General definitions}

\subsection{Split of groupoid actions}

Let us consider ${\mathcal C}$ a category with binary products, kernels of pairs of morphisms, and a final object $\{\star\}$. Thus, there are also fibered products (pullbacks) as well as finite limits. We may define group objects and groupoid objects in ${\mathcal C}$.

\medskip

Let $G$ be a group object in ${\mathcal C}$. For each object $X$, the set $G(X) =  {\rm Hom}(X,G)$ of $X$-elements of $G$ is a group. An action of $G$ in an object $M$ is a morphism,
$$\alpha\colon G\times M \to M,$$
satisfying $\alpha\circ(\mu\times {\rm Id}_M) = \mu\circ({\rm Id}_{G}\times\alpha)$
and $\alpha\circ ((e_{G}\circ \pi_M)\times {\rm Id}_M) = {\rm Id}_M$.\footnote{
	Where $e_G$ represent the identity $e_G\colon \{\star\}\to G$ and $\pi_M$ represents the unique morphism $\pi_M\colon M\to \{\star\}$.
}
The action $\alpha$ induces a group morphism $\alpha \colon G(\{\star\})\to {\rm Aut}(M)$, $ g\mapsto \alpha\circ \langle g\circ \pi_M, {\rm Id}_M\rangle$. 

\smallskip

From the action $\alpha$ we can form the \emph{action groupoid} $G\ltimes M \rightrightarrows M$,
with objects of objects $M$ and object of arrows $G\ltimes M$. The source map is the projection $\pi_2$ onto the second factor $M$, and the target map is $\alpha$. In terms of sets and elements, we have:
$$s(g,x) = x, \quad t(g,x) = \alpha(g,x), \quad  (h,gx)\circ(g,x) = (hg,x).$$

\begin{definition}
	We say that a groupoid object $\mathcal G\rightrightarrows M$ splits in $\mathcal C$ if there is an
	action $\alpha \colon G\times M \to M$ an action of a group object and a groupoid isomorphism $\varphi\colon G \ltimes M \xrightarrow{\sim} \mathcal G$. In such a case, we say that $G$ is a splitting group, $\alpha$ is a splitting action and $\varphi$ is a splitting morphism for $\mathcal G$ in ${\mathcal C}.$
\end{definition}

\begin{example} Let us remark that it is not in general possible to recover the group $G$ from the action groupoid $G\ltimes M$. For instance, in the category 
	of sets, let us consider two free and transitive actions of $\mathbb Z_4$ and $\mathbb K_4$ in a set $X = \{p_1,p_2,p_3,p_4\}$ of four elements. Since the actions are free and transitive we have that the action groupoid is, in both cases, the total equivalence realtion $X\times X$. Therefore we have groupoid isomorphism:	
		$$\xymatrix{ 
		\mathbb K_4\ltimes X \ar[r]^-{\sim} & X\times X & \mathbb Z_4 \ltimes X \ar[l]_-{\sim} \\
		(\sigma,p) \ar[r] & (p,\sigma\cdot p),(p,\tau\cdot p) & (\tau,p) \ar[l] 
	}$$
	Thus, a split groupoid object may have different realizations as an action groupoid.
\end{example}

\subsection{Normal epimorphisms}

Let us recall that an action of a group (set) $G$ in an object $X$ is a group morphism
$\phi\colon G\to {\rm Aut}(X)$. We say that $q\colon X\to Y$ is a \emph{categorical quotient} of the action of $G$ in $X$ if:
\begin{enumerate}
	\item For all $g\in G$, $q\circ \phi(g) = q$. In other words, $q$ is $G$-invariant.
	\item For all morphisms $f\colon X\to Z$ such that for all $g\in G$ $f\circ \phi(g) = f$ (i.e. $f$ is $G$-invariant) there exists a unique $\bar f\colon Y\to Z$ such that $\bar f\circ q = f$.
\end{enumerate}

Categorical quotients are epimorphisms and are unique up to isomorphims. Let us consider $\pi\colon M\to B$ an epimorphism in ${\mathcal C}$. The group group ${\rm Aut}_B(M)$ acts on $M$.

\begin{definition}
We say that $\pi$ is normal if it is the categorical quotient of $M$ by the action of the group (set) ${\rm Aut}_B(M)$.
\end{definition}

Some categorical approaches to Galois theory rely in the notion of strict epimorphism (\cite[I.10.2]{Artin1972} see also \cite[Def. 5.1.6]{Kashiwara2005}). 

\begin{definition}
Let $\pi\colon M\to B$ be an epimorphism.   
\begin{itemize}
    \item[(a)] A morphism $f\colon M\to Z$ is $\pi$-compatible if for any pair of morphisms $x,y\colon X\rightrightarrows M$ such that $\pi\circ x = \pi \circ y$ also $f\circ x = f\circ z$.
    \item[(b)] $\pi$ is a strict epimorphism if for any $\pi$-compatible $f$ there is a unique $\bar f\colon B\to Z$ such that $f = \bar f\circ \pi$. 
\end{itemize}
\end{definition}

\begin{proposition} Let $\pi\colon M\to B$ be an epimorphism in a category $\mathcal C$.
	\begin{enumerate}
		\item[(a)] If $\pi$ is normal then it is strict.
		\item[(b)]  Assume that any arrow with codomain $M$ is invertible. Then, if $\pi$ is strict $\pi$ is normal.
	\end{enumerate}

\end{proposition}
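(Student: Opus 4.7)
The plan is to handle both directions by comparing the two universal properties: normality factors morphisms through $\pi$ provided they are ${\rm Aut}_B(M)$-invariant, while strictness does so provided they are $\pi$-compatible. So the crux is to relate $\pi$-compatibility to ${\rm Aut}_B(M)$-invariance on the nose.

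For (a), I would start with a $\pi$-compatible morphism $f\colon M\to Z$ and show it is automatically ${\rm Aut}_B(M)$-invariant. This is obtained by specializing the definition of $\pi$-compatibility to the pair $x=g$, $y={\rm Id}_M$ for an arbitrary $g\in {\rm Aut}_B(M)$, with $X=M$: since $\pi\circ g=\pi$ by definition of ${\rm Aut}_B(M)$, compatibility forces $f\circ g=f$. Normality then yields a (unique) $\bar f\colon B\to Z$ factoring $f$ through $\pi$. Uniqueness of $\bar f$ is automatic because $\pi$ is an epimorphism, so the strict epimorphism property follows.

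For (b), the reverse implication needs the extra hypothesis that every arrow with codomain $M$ is invertible; this is exactly what lets us upgrade an arbitrary testing pair $x,y\colon X\rightrightarrows M$ to a single element of ${\rm Aut}_B(M)$. Concretely, start with $f\colon M\to Z$ which is ${\rm Aut}_B(M)$-invariant, and aim to show $f$ is $\pi$-compatible. Given $x,y\colon X\rightrightarrows M$ with $\pi\circ x=\pi\circ y$, both $x$ and $y$ are isomorphisms, so $g:=y\circ x^{-1}\colon M\to M$ is an isomorphism satisfying $\pi\circ g=\pi\circ y\circ x^{-1}=\pi\circ x\circ x^{-1}=\pi$, hence $g\in {\rm Aut}_B(M)$. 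Invariance of $f$ gives $f\circ y=f\circ g\circ x=f\circ x$, establishing $\pi$-compatibility. Strictness now produces a unique factorization, and since $\pi$ is ${\rm Aut}_B(M)$-invariant by construction, this verifies the two clauses of the categorical quotient definition.

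The easy part is (a), which is essentially a specialization argument. The substantive content lies in (b): the subtle point is recognizing that the invertibility hypothesis is used precisely to pass from a two-morphism testing pair to a single automorphism in ${\rm Aut}_B(M)$, without which one only has $\pi$-invariance under actual automorphisms rather than the stronger $\pi$-compatibility condition. I would make sure to emphasize in the write-up that, in the absence of this invertibility assumption, $\pi$-compatibility is generally strictly stronger than ${\rm Aut}_B(M)$-invariance, which is why the implication requires a hypothesis in that direction but not in the other.
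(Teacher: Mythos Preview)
Your proposal is correct and follows essentially the same approach as the paper. The paper packages the argument by first recording the chain of inclusions $\pi^*({\rm Hom}(B,Z)) \subseteq \{\pi\text{-compatible morphisms}\} \subseteq {\rm Hom}(M,Z)^{{\rm Aut}_B(M)}$ and then observing that normality collapses the outer terms while strictness collapses the inner ones; but the actual verifications---specializing to the pair $(\sigma,{\rm Id}_M)$ for (a), and forming $b\circ a^{-1}\in{\rm Aut}_B(M)$ for (b)---are exactly the ones you give.
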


\begin{proof}
Let us consider an object $Z$ an the composition map 
$$\pi^*\colon {\rm Hom}(B,Z)\to {\rm Hom}(M,Z).$$ 
The image of $\pi^*$ consists of $\pi$-compatible morphisms. Moreover, let us assume that $f\colon M\to Z$ is $\pi$-compatible. Then, for any $\sigma\in {\rm Aut}_B(M)$ we have $\pi \circ \sigma = \pi \circ {\rm Id}_M$ and therefore $f\circ \sigma = f$. It means that $\pi$-compatible morphisms are invariant under the action of ${\rm Aut}_B(M)$. In general we have a chain,\footnote{Here ${\rm Hom}(M,Z)^{{\rm Aut}_B(M)}$ stands for the set of ${\rm Aut}_B(M)$-invariant morphisms in ${\rm Hom}(M,Z)^{{\rm Aut}_B(M)}$.}
$$\pi^*({\rm Hom}(B,Z)) \subseteq \{\pi\mbox{-compatible morphisms}\} \subseteq {\rm Hom}(M,Z)^{{\rm Aut}_B(M)}.$$
Let us note the following: 
\begin{enumerate}
	\item[(i)] The epimorphism $\pi$ is normal if and only if for any $Z$ we have the equality between the first and third members of the chain.
	\item[(ii)] The epimorphism $\pi$ is strict if and only if for any $Z$ we have the equality between the first and second members of the chain.
\end{enumerate}
(a) Assume $\pi$ normal. Then the three members of the above chain coincide. In particular, any $\pi$-compatible morphism factorizes.

\noindent (b) Assume that $\pi$ is strict. We need to prove that any  ${\rm Aut}_B(M)$ invariant morphism $f\colon M\to Z$ is $\pi$-compatible. Let $a,b\colon X\rightrightarrows M$ be a pair of morphisms such that $\pi\circ a = \pi\circ b$. Since $f$ is ${\rm Aut}_B(M)$ invariant we have $f = f \circ (b\circ a^{-1})$ and from this $f\circ a = f\circ b$. Hence $f$ is $\pi$-compatible. 
\end{proof}

\begin{remark}
Let us recall that there are different of regular and effective epimorphism.
\begin{enumerate}
    \item[(a)] An epimorphism $q\colon Y\to X$ is said to be regular if it is the coequalizer of a pair of morphisms $Z\rightrightarrows Y \to Z.$
    \item[(b)] An epimorphism $q\colon Y\to X$ is said to be effective if it has a kernel pair and it is the coequalizer of a congruence of its kernel pair
    ${\rm KP}_q\rightrightarrows Y \to X$.
\end{enumerate}
In a general category we have:
$$\mbox{ effective } \Longrightarrow \mbox{ regular } \Longrightarrow \mbox{ strict }.$$
Moreover, in a category with pullbacks it is known that strict epimorphisms are effective. Therefore a \emph{normal} epimorphism in a category with pullbacks is effective. 
\end{remark}

\subsection{Galois structures}

The kernel pair of $\pi$, ${\rm KP}_\pi = M\times_B M \rightrightarrows M$, is a congruence (equivalence relation) in $M$, and therefore a grupoid object in $\mathcal C$. We set the source ($s$) and target ($t$) maps to be the first and second projection respectively. 
%Since $\mathcal C$ is a category with pullbacks, the notions of strict, regular and effective epimorphisms coincide. Therefore, another way of saying that $\pi$ is a strict epimorphism is to say that $B$ is the categorical quotient of $M$ by such a relation. In particular $M\times_B M$ can be seen as a groupoid acting on $M$. This is the key fact of Galois theory that yields the following definition. 
%\begin{definition}
%	Let $\pi\colon M \to B$ be an epimorphism. We call Galois groupoid of $\pi$ to the groupoid ${\mathcal Gal}_{\pi} = M \times_B M \rightrightarrows M$ where the source and target morphisms are the natural projections $\pi_1$ and $\pi_2$ respectively.  
%\end{definition}
It represents the endomorphisms of $M$ over $B$ in the following sense: let ${\rm KP}_{\pi}(M)$ be the set of sections of the source map $(s)$; the composition with the target map yields a bijection.
$$\xymatrix{ & {\rm KP}_{\pi} \ar[ld]^-{s} \ar[rd]_-{t} &  & {\rm KP}_{\pi}(M)\ar[r]^-{\sim} & {\rm End}_B(M) \\  M\ar[rr]^-{t\circ\sigma} \ar@/^1.2pc/[ur]^-{\sigma} & & M & \sigma \ar[r]^-{\sim} & t\circ\sigma }$$

Let us consider an splitting action $\alpha\colon G\times  M \to M$ of ${\rm KP}_{\pi}$. 
The splitting isomorphism is necessarily 
$$\langle \pi_2,\alpha \rangle  \colon G\ltimes M 
\xrightarrow{\sim} {\rm KP}_{\pi}\quad (g,x)\mapsto (x,\alpha(g,x)),$$
which is completely determined by $\alpha$. In other words, an splitting action of ${\rm KP}_\pi$ is an action that gives $\pi\colon M\to B$ the structure of \emph{principal homogeneous space} modeled over $G\times B \to B$ in the relative category ${\mathcal C}_B$ of arrows over $B$.

\smallskip

Let us note that the splitting action $\alpha$ induces
a bijection between $G(M)$ and ${\rm KP}_\pi(M)$ and therefore a bijection,
$$G(M) \xrightarrow{\sim} {\rm End}_B(M),
\quad g \mapsto \alpha_g = \alpha\circ \langle g, {\rm Id}_M\rangle.$$ 
However, such bijection is not compatible with the composition. We have
$$\alpha_{g}\circ \alpha_h = 
\alpha\circ \langle g , {\rm Id}_M \rangle \circ \alpha \circ \langle h , {\rm Id}_M\rangle  = \alpha\circ \langle g\circ \alpha_h , \alpha_h \rangle$$
on the other hand,
$$\alpha_{gh} = \alpha\circ \langle g , \alpha_h \rangle.$$
It follows that, if $g = g\circ \alpha_h$ then, $\alpha_{gh} = \alpha_g\circ \alpha_h$. We see that this is satisfied if $g \in G(B)$, given that $\alpha_h\in {\rm End}_B(M)$ induces the identity in $B$. For \emph{normal} epimorphisms this condition is optimal, as 
$G(M)^{{\rm Aut}_B(M)} = G(B)$.
We have thus,
$$ 
\xymatrix{ & G(M)  \ar[r]^-{\sim} & {\rm End}_B(M) \\
	G(\{\star\}) \ar@{^{(}->}[r] & 
	G(B) \ar@{^{(}->}[r] \ar[u] & 
	{\rm Aut}_B(M)  \ar[u] 
}
$$
where the maps in the lower row are injective group morphisms.

\begin{definition}\label{df:Galois_st}
	Let $\pi\colon M\to B$ be an epimorphism in ${\mathcal C}$. A Galois structure for $\pi$ is an splitting action $\alpha\colon G\times M \to M$ for ${\rm KP}_\pi$ such that the induced group morphism
	$$G(\{\star\})\xrightarrow{\sim}{\rm Aut}_B(M),\quad g\mapsto \alpha_g$$
	is an isomorphism.
	% We say that a Galois structure is strong in ${\mathcal C}$ if the action $\alpha$ induces an isomorphism $G(\{\star\})\xrightarrow{\sim}{\rm Aut}_B(M)$.
\end{definition}

\begin{definition}
	We say that an epimorphism $\pi\colon M\to B$ of ${\mathcal C}$ is Galois if satisfies the following conditions:
		\begin{enumerate}
			\item[(i)] it is normal;
			\item[(ii)] it admits a unique (up to isomorphism) Galois structure.
		\end{enumerate}
		We call Galois group of $\pi$ the group object ${\rm Gal}_\pi$ appearing in the unique Galois structure.
\end{definition}

Note that if $\alpha$ is a Galois structure for $\pi$ then we have isomorphisms,
$$G(\{\star\}) \xrightarrow{\sim} G(B) \xrightarrow{\sim} {\rm Aut}_B(M).$$

\smallskip

%If ${\rm KP}_\pi$ splits in $\mathcal C$ then it also splits in the relative category $\mathcal C_B$ of arrows over $B$.
Given an splitting action $\alpha$ for ${\rm KP}_\pi$ as a groupoid object in $\mathcal C$ we may define an splitting action
$$\tilde \alpha \colon  (G\times B)\times_B M \to M, \quad ((g,b),x) \mapsto \alpha(g,x),$$
for ${\rm KP}_{\pi}$ as a groupoid object in $\mathcal C_B$. In some cases, a splitting action may fail to be a Galois structure in the category  ${\mathcal C}$ but be so in the relative category ${\mathcal C}_B$ of arrows over $B$.

\begin{example} Let ${\bf Set}$ be the category of sets and $\pi\colon M\to B$ be a surjective map. In any case ${\rm KP}_{\pi}$ splits in ${\bf Set}_B$, and any splittig action is a Galois structure. The group object acting is a family of groups indexed by $B$ and acting freely and transitively on the fibers of $\pi$. It is Galois if and only if the fibers have $1$, $2$ or $3$ points.
	
However, ${\rm KP}_\pi$ splits in ${\bf Set}$ if and only if all fibers of $\pi$ have exactly the same cardinal. Finally, $\pi$ is Galois in ${\bf Set}$ if and only if it is a bijection, otherwise we may have the uniqueness for the Galois structure, but $G \subsetneqq {\rm Aut}_B(M)$.
\end{example}

\begin{example}
	Let ${\bf Mnf}$ be the category of smooth manifolds with smooth maps. By direct examination of the definition we have that a an splitting action for a submersion $\pi\colon M\to B$ is an structure of a principal bundle for some structure Lie group $G$. The splitting actions is far from being unique, moreover, $G$ represents a very small part of ${\rm Aut}_B(M)$. 
\end{example}

\subsection{Galois correspondence}

Let us recall that a congruence (internal equivalence relation) in $M$ is a subobject of $M\times M$ having the reflexive, symmetric and transitive property. We say that a congruence $R\subseteq M\times M$ is \emph{effective} if it is the kernel pair of an effective epimorphism. The class of an effective epimorphism up to isomorphisms of the codomain is called an effective quotient. We have then a diagram:
$$R\rightrightarrows M \to M/R.$$

The class  ${\rm Rel}(M)$ of effective of congruences in $M$ is partially ordered. For two congruences represented by monomorphisms $i\colon R\hookrightarrow M\times M$ and $i'\colon R'\hookrightarrow M\times M$ we say that $R\leq R'$ if there is $j\colon R \hookrightarrow R'$ such that $i' \circ j = i$.
Analogously the class ${\rm Quot}(M)$ of effective quotients of $M$ is ordered. For two effective quotients represented by effective epimorphisms $q\colon M\to Z$ and $q'\colon M\to Z'$ we say $q\geq q'$ if $q'$ is $q$-compatible, so that there is $p\colon Z\to Z'$
such that $p\circ q = q'$. There is a natural bijective Galois connection between ${\rm Rel}(M)$ and ${\rm Quot}(M)$ of effective quotients of $M$ given by the adjunctions:
$${\rm KP}\colon {\rm Quot}(M) \to {\rm Rel}(M),\quad (q\colon M\to Z) \mapsto {\rm KP}_q = M\times_Z M,$$
$${\rm coeq}\colon {\rm Rel}(M) \to {\rm Quot}(M),\quad  R \mapsto (q\colon M\to M/R).$$

The quotient by a group action $\alpha\colon G\times M\to M$ is also understood in the above terms. We have $M/G = {\rm coeq}(\alpha,\pi_2)$ if such coequalizer exists in $\mathcal C$. Under suitable assumptions on the existence and nature of quotients by group actions, the general Galois connection gives rise to the classical Galois correspondence.

\begin{theorem}\label{th:GC}
Let $\pi\colon M\to B$ be a Galois epimorphism. Let us assume the following:
\begin{itemize}
    \item[(a)] any subgrupoid object of the action groupoid ${\rm Gal}_{\pi}\ltimes M$ is of the form $H\ltimes M$ where $H$ is a subgroup object of ${\rm Gal}_\pi$;
    \item[(b)] for any subgroup object $H\subseteq {\rm Gal}_\pi$ it does exists the effective quotient $M/H$.
\end{itemize}
Then the following sentences hold:
\begin{itemize}
    \item[(i)] The assignation:
$$H\subseteq G  \,\,\, \leadsto \,\,\, q_H \colon M \to M/H,$$
establishes an order reversing bijective correspondence between the partially ordered class ${\rm Sub}(G)$ of subgroup objects of $G$ and the partially ordered class  ${\rm Quot}_{\geq \pi}(M)$ of intermediate effective quotients of $M$.
    \item[(ii)] Let us consider any effective intermediate quotient $q\colon M\to Z$ with corresponding subgroup $H\subseteq {\rm Gal}_\pi$. The restriction of the Galois structure $\alpha$ to $H\times M$ is a Galois structure for $q$.
%    \item[(iii)] Let us assume that $H$ is a normal subgroup object of ${\rm Gal}_{\pi}$ such that there exist the quotient group object ${\rm Gal}_{\pi}/H$. Let us consider the epimorphism $p\colon Z\to B$ such that $\pi = p\circ q$. There is a Galois structure $\bar\alpha$ in $p$ that makes commutative the following diagram:
%    $$    \xymatrix{{\rm Gal}_{\pi}\times M \ar[r]^-{\alpha} \ar[d]^-{\langle \rho, p\rangle}  & M \ar[d]^-{p} \\ ({\rm Gal}_{\pi}/H) \times Z \ar[r]^-{\bar\alpha} & Z} $$ 
\end{itemize}

\end{theorem}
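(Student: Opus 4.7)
The plan is to obtain (i) as the composition of two bijections: the general Galois connection between effective congruences and effective quotients (recalled just before the theorem) restricted to sub-congruences of ${\rm KP}_\pi$, and a bijection between ${\rm Sub}(G)$ and effective sub-congruences of ${\rm KP}_\pi$ provided by hypotheses (a)--(b) together with the splitting isomorphism $\langle\pi_2,\alpha\rangle\colon G\ltimes M \xrightarrow{\sim} {\rm KP}_\pi$. For the first bijection, an intermediate effective quotient $q\geq \pi$ has $\pi$ factoring through it, which forces ${\rm KP}_q\hookrightarrow {\rm KP}_\pi$; conversely, an effective sub-congruence $R\leq {\rm KP}_\pi$ makes $\pi$ $R$-invariant, so $\pi$ factors through $M/R$, placing $M/R\to B$ above $\pi$. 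Order reversal follows because finer congruences produce coarser quotients.

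For the second bijection, the splitting isomorphism identifies effective sub-congruences of ${\rm KP}_\pi$ with sub-groupoids of $G\ltimes M$ that happen to be equivalence relations on $M$. By hypothesis (a), each such sub-groupoid has the form $H\ltimes M$ for a unique subgroup $H\leq G$; by hypothesis (b) the coequalizer $q_H\colon M\to M/H$ is effective, whence $H\ltimes M$ is itself effective and ${\rm KP}_{q_H}=H\ltimes M$. The assignment $H\mapsto H\ltimes M$ is manifestly inclusion-preserving, and composing with the first bijection yields the order-reversing correspondence $H\mapsto (q_H\colon M\to M/H)$ asserted in (i).

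To prove (ii), fix an intermediate $q_H\colon M\to M/H$. By construction ${\rm KP}_{q_H}=H\ltimes M$, so $\alpha|_{H\times M}$ is tautologically a splitting action for ${\rm KP}_{q_H}$. Only the isomorphism $H(\{\star\})\xrightarrow{\sim}{\rm Aut}_{M/H}(M)$ remains: injectivity is inherited from $G(\{\star\})\xrightarrow{\sim}{\rm Aut}_B(M)$ via $H(\{\star\})\hookrightarrow G(\{\star\})$; for surjectivity, any $\sigma\in {\rm Aut}_{M/H}(M)\subseteq {\rm Aut}_B(M)$ equals $\alpha_g$ for a unique $g\in G(\{\star\})$, and the condition $q_H\circ\alpha_g=q_H$ places $(\alpha_g,{\rm Id}_M)$ inside ${\rm KP}_{q_H}(M)=(H\ltimes M)(M)$, forcing $g$ to factor through $H\hookrightarrow G$. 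The main technical obstacle I foresee is exactly this last implication: concluding from ${\rm KP}_{q_H}=H\ltimes M$ that the global element $g\in G(\{\star\})$ actually lives in $H(\{\star\})$ requires unpacking the identification $G(M)\xrightarrow{\sim}{\rm End}_B(M)$ at the level of $M$-points and using the effectiveness of $H\ltimes M$ as a congruence, rather than merely as a sub-groupoid.
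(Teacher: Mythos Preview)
Your argument for (i) is essentially the paper's own: take the kernel pair of an intermediate quotient, transport it through the splitting isomorphism $\langle\pi_2,\alpha\rangle$ to a subgroupoid of $G\ltimes M$, invoke hypothesis (a) to recognise it as $H\ltimes M$, and use hypothesis (b) to go back. The paper phrases this as constructing the inverse correspondence $q\mapsto H_q$ directly rather than as a composition of two bijections, but the content is identical.

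For (ii) you actually do more than the paper. The paper's proof of (ii) consists of a single sentence: it is enough that $\langle\pi_2,\alpha\rangle$ carries $H\ltimes M$ onto ${\rm KP}_q$. That verifies only that $\alpha|_{H\times M}$ is a splitting action for ${\rm KP}_q$; the isomorphism $H(\{\star\})\xrightarrow{\sim}{\rm Aut}_{M/H}(M)$ demanded by Definition~\ref{df:Galois_st} is not addressed at all. Your plan correctly isolates this as the remaining work, and the obstacle you flag---deducing that the global element $g\in G(\{\star\})$ lies in $H(\{\star\})$ from the fact that $\langle g\circ\pi_M,{\rm Id}_M\rangle$ factors through $H\times M$---is genuine. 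It comes down to whether $\pi_M\colon M\to\{\star\}$ is an epimorphism (so that $g\circ\pi_M$ factoring through the monomorphism $H\hookrightarrow G$ forces $g$ to do so), which holds in all the concrete categories the paper considers but is an extra assumption in the abstract setting. The paper simply does not engage with this point, so your version is the more careful one.
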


\begin{proof}
(i)
It is clear that the assignation reverses order, for $H\subseteq H'$ we have $q_H \geq q_H'$. In order to see that it is bijective, let us construct its inverse correspondence. Let $q\colon M\to Z$ be a representative of an effective quotient with $q\geq \pi$. The kernel pair ${\rm KP}_q$ is an effective congruence in $M$ and ${\rm KP}_q\leq {\rm KP}_\pi$. The splitting isomorphim establishes an isomorphism of ${\rm KP}_q$ with a subgroupoid object of ${\rm Gal}_{\pi}\ltimes M$ which, by condition (a), is of the form $H_q\ltimes M$ for a subgroup object $H_q$ depending on $q$. We have that the effective epimorphism $q\colon M\to Z$ is equivalent to $q_{H_q}\colon M\to M/H_q$. Then we have:
$$H \leadsto q_H \leadsto H, \quad q \leadsto H_q \leadsto q.$$
(ii) It is enough to note that the splitting isomorphism $\langle \pi_2,\alpha\rangle$ maps $H\ltimes M$ onto ${\rm KP}_q$.
\end{proof}

\section{Classical Galois theory}

\subsection{Covering spaces}

Let ${\bf Top}$ be the category of topological spaces. A covering map $\pi\colon M\to B$, with $M$ and $B$ connected, is a \emph{Galois cover} if $\pi\times {\rm Id}_M\colon M\times_B M \to M$ is a trivial covering space. There is a Galois theory for covering spaces, analogous to classical Galois theory (see, for instance \cite{Khovanskii2014}). 

\begin{theorem}
	Let $\pi\colon M\to B$ be a surjective local homeomorphism with $M$ and $B$ connected. 
	The following are equivalent:
	\begin{enumerate}
		\item[(a)] $\pi$ is a Galois cover.
		\item[(b)] $\pi$ is a Galois in ${\bf Top}$.
		\item[(c)] ${\rm KP}_\pi$ splits in  ${\bf Top}$.
		%\item[(c)] $\pi$ is weakly Galois in ${\bf Top}_B$
		%\item[(d)] $\pi$ is a cover and weakly pre-Galois in ${\bf Top}$.
	\end{enumerate}
	In any case, the Galois group object is ${\rm Gal}_{\pi} = {\rm Aut}_B(M)$ with the discrete topology. 
\end{theorem}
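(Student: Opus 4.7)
My plan is to establish the cycle (b)$\Rightarrow$(c)$\Rightarrow$(a)$\Rightarrow$(b), identifying the splitting group with ${\rm Aut}_B(M)$ equipped with the discrete topology along the way. The implication (b)$\Rightarrow$(c) is immediate, since a Galois epimorphism admits by definition a Galois structure, which is in particular a splitting of ${\rm KP}_\pi$.

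For (c)$\Rightarrow$(a), I would observe that the source map $\pi_1\colon {\rm KP}_\pi\to M$ is the pullback of the covering $\pi$ along itself, hence is itself a covering map. A splitting $\varphi\colon G\times M \xrightarrow{\sim} {\rm KP}_\pi$ is a homeomorphism over $M$, so the projection $G\times M\to M$ onto the second factor is also a covering; its fibers are all homeomorphic to $G$ and discrete, which forces $G$ to carry the discrete topology. This realizes ${\rm KP}_\pi\to M$ as the trivial covering with fiber $G$, which is exactly the Galois-cover condition.

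For (a)$\Rightarrow$(b), I would construct the Galois structure explicitly. When ${\rm KP}_\pi\to M$ is a trivial covering, the set of its global sections is discrete and, via the bijection ${\rm KP}_\pi(M)\cong {\rm End}_B(M)$ recalled in the paper combined with the standard fact that every $B$-endomorphism of a connected covering is an automorphism, identifies as a set with ${\rm Aut}_B(M)$. Taking $G={\rm Aut}_B(M)$ with the discrete topology and $\alpha(g,x)=g(x)$ produces a continuous splitting (continuity being supplied by the trivialization itself), and since $G$ is discrete the identification $G(\{\star\})=G={\rm Aut}_B(M)$ makes $\alpha$ a Galois structure. Normality of $\pi$ follows because the Galois-cover property forces ${\rm Aut}_B(M)$ to act transitively on the fibers, so any ${\rm Aut}_B(M)$-invariant continuous map is constant on fibers and descends continuously through the open surjection $\pi$, which is therefore a topological quotient.

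The delicate point is the uniqueness of the Galois structure required for condition (b). The argument used in (c)$\Rightarrow$(a) applies verbatim to any splitting, forcing the splitting group to be discrete; the requirement $G(\{\star\})\cong{\rm Aut}_B(M)$ in Definition~\ref{df:Galois_st} then pins the group down up to the unique isomorphism with ${\rm Aut}_B(M)$, and the splitting isomorphism $\langle\pi_2,\alpha\rangle$ in turn determines the action. This is the main obstacle: the potential freedom to put exotic topologies on the splitting group must be excluded, and it is the combination of the local-homeomorphism property of $\pi$ with the connectedness of $M$ that rules this out.
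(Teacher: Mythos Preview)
Your proposal is correct and follows essentially the same route as the paper: the same cycle of implications, the same observation that a splitting forces the group to be discrete (since $G\times M\to M$ becomes a local homeomorphism), and the same identification of sections of the trivial cover ${\rm KP}_\pi\to M$ with ${\rm Aut}_B(M)$. The only presentational difference is that the paper, in proving (a)$\Rightarrow$(b), starts from an \emph{arbitrary} trivialization $\varphi\colon G\times M\to M\times_B M$ (with $G$ a mere discrete space), builds the group law on $G$ from the bijection $g\mapsto\sigma_g$, and thereby obtains existence and uniqueness of the Galois structure in one stroke; you instead first exhibit the canonical structure with $G={\rm Aut}_B(M)$ and treat uniqueness as a separate paragraph. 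Both arguments rely on the same ingredients and neither adds anything the other lacks.
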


\begin{proof}
	 $(c)\Rightarrow(a)$. Let us assume that there is an splitting isomorphism $\varphi\colon G\times M \xrightarrow{\sim} M\times_B M$. Then we have that the projection on the second factor $G\times M\to M$ is a local homeomorphism. Thus, $G$ is discrete and $M\times_B  M\to M$ is a trivial cover. It follows that $\pi$ is a Galois cover. We also have $(b)\Rightarrow(c)$. 
	 
	Let us see $(a)\Rightarrow(b)$. We assume that $M\times_B M\to M$ is a trivial cover, thus there is a trivialization,
	$$\xymatrix{G\times M \ar[rr]_-{\sim}^-{\varphi} \ar[rd]_-{\bar\pi} & & M \times_B M  \ar[dl]_-{\pi_1}\\ & M}$$
	with $G$ a discrete topological space. Let us check that there is a group structure on $G$ such that it is isomorphic to ${\rm Aut}_B(M)$ and $\varphi$ is the action of ${\rm Aut}_B(M)$ in $M$.
	
	For each $g\in G$ let us consider the map $\sigma(g)\colon M\to M$ defined by the formula $\sigma(g)(x) = \pi_2(\varphi(g,x))$. It is a continuous map that induces the identity on $B$ and thus, an automorphism of $M$ over $B$. On the other hand, let $\sigma$ be an automorphism of $M$ over $B$. Then, the map $x\mapsto \varphi^{-1}(x,\sigma(x))$ is a section of $\bar\pi$. Since $\bar\pi$ is trivial, then there is a unique $g$ in $G$ such that $\varphi^{-1}(x,\sigma(x)) = (g,x)$. We define this $g$ to be $g(\sigma)$. It is easy to check that those bijections inverse of each other. With the group operation in $G$ induced by $\sigma_{gh} = \sigma_g\circ \sigma_h$ then we have that $\varphi$ is a splitting morphism and thus $\pi$ admits a Galois structure, where the action of $G$ in $M$ is isomorphic to that of ${\rm Aut}_B(M)$ endowed with the discrete topology, and thus unique. 
	
	Let us discuss the normality of $\pi$. In this context, it means that the action of ${\rm Aut}_B(M)$ is transitive on the fibers. Let $m_1$, $m_2$ be two points of $M$ in the same fiber. Let $g$ be the element of $G$ such that
	$\varphi(g,m_1) = (m_1,m_2)$. Then, it is clear that $\sigma(g)(m_1) = m_2$. 
\end{proof}

Note that Galois covers are under the hypothesis of Theorem \ref{th:GC}. The subgroupoids of $G\ltimes M$ are of the form $H\ltimes M$ with $H$ a subgroup of $G$ and the quotient $M/H$ exists in $\bf Top$. We obtain the well known correspondence between intermediate coverings and subgroups of $G$. 

%Let us note that a Galois cover $\pi\colon M\to B$ admits a unique splitting action in ${\bf Top}$, but the uniqueness is lost in the relative category ${\bf Top}_B$. Let us consider the case in which $B$ is a connected, locally arc-connected and locally simply connected with non-commutative fundamental group. Let us choose a point $b_0\in B$. We have two non-isomorphic group bundles:
%$$\Pi_{1}(B,b_0)\times B \to B, \quad \Pi_{1}(B,\_)\to B.$$ 
%where the fiber on the second on $b\in B$ is $\Pi_1(B,b)$. Let us consider the classical construction of the universal cover $\pi\colon \tilde B\to B$ such that the elements of $\tilde B$ are homotopy classes of curves starting at $b_0$. It is a Galois cover, but there are at least two non-isomorphic splitting actions in ${\bf Top}_B$. We may consider:
%$$(\Pi_{1}(B,b_0)\times B) \times_B \tilde B \to \tilde B\times_B\tilde B, \quad (([\sigma], b), [\gamma]) \mapsto ([\gamma], [\sigma][\gamma])),$$
%or,
%$$\Pi_1(B,\_) \times_B \tilde B \to \tilde B, \quad ([\tau],[\gamma]) \mapsto ([\gamma], [\gamma][\tau]^{-1}),$$
%two different splitting actions for the universal cover $\pi\colon \tilde B \to B$ in ${\bf Top}_B$.

\subsection{Algebraic Galois extensions}

Let ${\bf Cmm}$ be the category of commutative rings with unit. The dual category ${\bf Cmm}^{\rm op}$ is the category of affine schemes. 

\smallskip

Let us consider an extension of rings $i\colon K \hookrightarrow L$. The dual map \linebreak $i^*\colon {\rm Spec}(L)\to {\rm Spec}(K)$ is an epimorphism in ${\bf Cmm}^{op}$. In this case the kernel pair is ${\rm Spec}(L\otimes_K L) \rightrightarrows {\rm Spec}(L)$ where the source and target maps are the dual of the canonical embeddings $a\mapsto a\otimes 1$ and $a\mapsto 1\otimes a$ respectively. 

\smallskip

Group objects is ${\bf Cmm}^{\rm op}$ are commutative Hopf algebras. Thus, splitting actions in ${\bf Cmm}^{\rm op}$ are the already known Hopf-Galois structures, in the sense of Chase and Sweedler \cite{Chase1969}. It is well known that Hopf-Galois structures are not unique in general. %Thus, a weakly Galois extension may be non-strongly Galois.

\smallskip

Let us revisit classical Galois theory. Let us consider $i$ to be a finite extension of fields. Classically, it is called a Galois extension 
if it satisfies one of the following equivalent conditions (see \cite{SanchoSancho2013} pp. 140-141):
\begin{enumerate}
	\item[(a)] $L$ is separable and normal\footnote{
		It is clear that our categorical definition of normality coincides, in this context, with the classical definition $L^{{\rm Aut}_K(L)} = K$.
	} 
	over $K$.
	\item[(b)] $|{\rm Aut}_K(L)| = {\rm dim}_K L$.
	\item[(c)] $L\otimes_K L$ (with $L$-algebra structure given by the embedding $a\mapsto a\otimes 1$) is a finite trivial\footnote{A finite trivial $L$-algebra is an $L$-algebra isomorphic to a direct product of a finite number of copies of $L$, $\prod_{i\in I}L$.} $L$-algebra.
\end{enumerate}

Let us consider $i\colon K \hookrightarrow L$ a Galois extension, and let $G$ be ${\rm Aut}_K(L)$. Then, it is well known that the trivialization of $L\otimes_K L$ can be realized as a split. 
We have the trivial finite $L$-algebra ${\rm Maps}(G,L)$ and an isomorphism:
$$\varphi \colon L\otimes_K L \xrightarrow{\sim} {\rm Maps}(G,L)= \prod_{g\in G} L , \quad 
a\otimes b \mapsto f_{a\otimes b},$$
where $f_{a\otimes b}(g) = g(a)b$. Now we have that 
${\rm Maps}(G,L) = {\rm Maps}(G,K)\otimes_K L$. Thus, in the dual category we have that the map,
$$\varphi^* \colon {\rm Spec}({\rm Maps}(G,K)) \times_K {\rm Spec}(L) \xrightarrow{\sim} {\rm KP}_{i^*},$$
is a splitting isomorphism of the groupoid ${\rm KP}_{i^*}.$ Noting that
${\rm Maps}(G,K) = {\rm Maps}(G,\mathbb Z)\otimes_{\mathbb Z}K$ we see that the splitting isomorphism can be defined in the category ${\bf Cmm}^{op}$ and not only in the relative category ${\bf Cmm}_K^{op}$. We may state the following result.

\begin{proposition}
	Let us consider $i\colon K \hookrightarrow L$ a finite separable field extension, and $i^*\colon {\rm Spec}(L)\to {\rm Spec(K)}$ its dual morphism. The following are equivalent:
	\begin{enumerate}
		\item[(a)] $i\colon K\hookrightarrow L$ is a Galois extension.
		\item[(b)] $i^*$  is Galois in ${\bf Cmm}^{\rm op}$.
		\item[(c)] $i^*$  is Galois in ${\bf Cmm}^{\rm op}_K$.
	\end{enumerate}
	In such a case, if $G = {\rm Aut}_K(L)$, there is a natural action of $G$ in $L\otimes_K L$ such that $(L\otimes_K L)^G$ is a Hopf $K$-algebra canonically isomorphic to $
	{\rm Maps}(G,K)$.
\end{proposition}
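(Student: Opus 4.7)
The plan is to prove the cyclic implications $(a)\Rightarrow(b)\Rightarrow(c)\Rightarrow(a)$, then derive the final statement about the invariant subalgebra. The construction of the splitting isomorphism from a classical Galois extension is already displayed in the paragraphs preceding the proposition, so the bulk of the remaining work is verifying the Galois axioms (in both ${\bf Cmm}^{\rm op}$ and ${\bf Cmm}^{\rm op}_K$), establishing uniqueness of the Galois structure, and closing the cycle with a classical ingredient.

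For $(a)\Rightarrow(b)$, the splitting group is $\Gamma = {\rm Spec}({\rm Maps}(G, \mathbb{Z}))$ with $G = {\rm Aut}_K(L)$, and I would verify directly that the displayed $\varphi^*$ is an action-groupoid isomorphism. To check the Galois axiom, I compute $\Gamma(\{\star\}) = {\rm Hom}_{\rm Ring}({\rm Maps}(G,\mathbb{Z}), \mathbb{Z})$, which is canonically in bijection with $G$ via the primitive idempotents of $\mathbb{Z}^G$, and verify that the resulting group homomorphism $\Gamma(\{\star\})\to {\rm Aut}_K(L)$ is the identity. Normality of $i^*$ dualizes to the classical condition $L^{{\rm Aut}_K(L)} = K$ built into (a). For uniqueness, given another candidate splitting with group object $\Gamma' = {\rm Spec}(H')$, the isomorphism $H'\otimes_{\mathbb{Z}} L \cong L\otimes_K L$ pins down the $\mathbb{Z}$-rank of $H'$ to be $[L:K]$, while the Galois axiom supplies $[L:K]$ distinct ring maps $H'\to\mathbb{Z}$; pairwise coprimality of their kernels (all having residue field $\mathbb{Z}$), the Chinese Remainder Theorem, and a rank count force $H' \cong \mathbb{Z}^{[L:K]}$ as $\mathbb{Z}$-algebras, whence the Hopf structure is necessarily dual to the group law on $G$.

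The implication $(b)\Rightarrow(c)$ is routine base change: the splitting in ${\bf Cmm}^{\rm op}$ passes to ${\bf Cmm}^{\rm op}_K$ via $\tilde\alpha$ with splitting group $\Gamma\times_{\mathbb{Z}}{\rm Spec}(K) = {\rm Spec}({\rm Maps}(G, K))$, and the $K$-points of this base change coincide with $\Gamma(\{\star\})$ so the Galois axiom persists. For $(c)\Rightarrow(a)$, being a Galois epimorphism forces normality; its dual in the relative category reads $L^{{\rm Aut}_K(L)} = K$, and Artin's theorem on finite groups of field automorphisms then yields $[L:K] = |{\rm Aut}_K(L)|$, which is the classical criterion (b) for $L/K$ to be Galois.

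Finally, for the statement about $(L\otimes_K L)^G$: the splitting isomorphism $\varphi\colon L\otimes_K L \xrightarrow{\sim} {\rm Maps}(G, L) = {\rm Maps}(G, K)\otimes_K L$ transports the pointwise $G$-action on values of ${\rm Maps}(G, L)$ (equivalently, the action on the second tensor factor via the natural action $G\curvearrowright L$) to a canonical $G$-action on $L\otimes_K L$. On the right-hand side the invariants are manifestly ${\rm Maps}(G, K)\otimes_K L^G = {\rm Maps}(G, K)$, carrying the standard Hopf $K$-algebra structure inherited from the constant group scheme; pulling back via $\varphi^{-1}$ gives the desired canonical isomorphism of Hopf $K$-algebras. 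The main technical obstacle throughout is the uniqueness step for the Galois structure (the rank-and-idempotent analysis of $H'$), since the other steps are essentially bookkeeping with the explicit $\varphi$.
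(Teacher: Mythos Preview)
The paper does not supply a standalone proof of this proposition: the paragraphs preceding it construct the splitting isomorphism $\varphi^*$ (so that a classical Galois extension yields a splitting of ${\rm KP}_{i^*}$ in ${\bf Cmm}^{\rm op}$), and the result is then simply stated. Your cyclic scheme $(a)\Rightarrow(b)\Rightarrow(c)\Rightarrow(a)$ and the appeal to Artin's theorem to close the loop are therefore already more than the paper offers, and the overall strategy is sound.

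Two steps in your outline need reinforcement, however.

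For the uniqueness part of $(a)\Rightarrow(b)$: the kernels of your $n$ ring homomorphisms $H'\to\mathbb{Z}$ are prime but not maximal (the residue ring is $\mathbb{Z}$, not a field), so ``pairwise coprimality'' is not automatic and the Chinese Remainder Theorem does not apply as stated. A cleaner line is to note that the $n$ composites $H'\to\mathbb{Z}\to L$ must induce the $n$ projections $H'\otimes_{\mathbb{Z}}L\cong L^n\to L$, whence the product map $H'\to\mathbb{Z}^n$ becomes an isomorphism after $-\otimes_{\mathbb{Z}}L$; but $L$ is never faithfully flat over $\mathbb{Z}$ (it lies over a single point of ${\rm Spec}\,\mathbb{Z}$), so this does not descend for free, and some additional argument is required.

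For $(b)\Rightarrow(c)$: you verify that the base-changed splitting satisfies the Galois axiom, but you do not address \emph{uniqueness} of the Galois structure in ${\bf Cmm}^{\rm op}_K$, and this does not follow formally from uniqueness over $\mathbb{Z}$. The relative category has strictly more group objects, and it is exactly here that the non-classical Hopf--Galois structures of Greither--Pareigis live. What rescues uniqueness is the Galois axiom itself: a Hopf $K$-algebra $H$ with $\dim_K H=n=[L:K]$ admitting $n$ distinct $K$-algebra maps $H\to K$ is, by linear independence of characters and a dimension count, forced to be $K^n\cong{\rm Maps}(G,K)$. This step should be made explicit; once it is, your argument for the final assertion about $(L\otimes_K L)^G$ via transport along $\varphi$ goes through as written.
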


Let us fix a Galois extension $i\colon K \hookrightarrow L$ with group $G$. Let $H$ be a subgroup of $G$. Then, we realize the field of invariants $L^H$ as the equalizer,
$L^H \to L \rightrightarrows L\otimes_{L^H} L.$
Therefore, in the dual category ${\rm Spec}(L^H)$ appears as the effective quotient of ${\rm Spec}(L)$ by the action of the group object $H$. Moreover, since $G\ltimes {\rm Spec}(L)$ is the spectrum of a $L$-trivial algebra, we have that any subgroupoid is of the form $H\ltimes {\rm Spec}(L)$. We are under the hypothesis of Theorem \ref{th:GC}, which in this particular case gives the classical Galois correspondence between intermediate field extensions and subgroups.

\section{Foliated manifolds}

\subsection{Smooth foliated manifolds}

Let $\bf FMn$ be the category of smooth manifolds endowed with regular foliations.
Objects are pairs $(M,\mathcal D)$ where $M$ is a smooth manifold and $\mathcal D$ is an involutive linear subbundle of $TM$. Morphisms $f\colon (M,\mathcal D)\to (M',\mathcal D)$ are smooth maps $f\colon M\to M'$ such that for all $p\in M$ the differential $d_pf$ induces a linear epimorphism from $\mathcal D_p$ to $\mathcal D'_p$. This implies that $f$ maps leaves of $\mathcal D$ onto leaves of $\mathcal D'$ by local submersions.
A manifold $B$ admits two trivial structures of foliated manifold $(B,TB)$, with only a leaf $B$ and $(B,0_B)$ with point leaves.

\smallskip

Let $(G,\mathcal D_G)$ be a group object in $\bf FMn$. It is clear that $G$ is a Lie group. The existence of the identity element implies that the map,
$$(\{\star\}, 0_\star) \to (G,\mathcal D_G), \quad \star \mapsto e,$$
is a morphism of foliated manifolds, so that ${\rm rank}(\mathcal D_G) \leq {\rm rank}(0_\star) = 0$. It follows $\mathcal D_G = 0_G$. By abuse of notation we write $G$ instead of $(G,0_G)$. If is also clear that an action of $G$ in $(M,\mathcal D)$ to the  category of foliated manifolds is an cation of $G$ in $M$ by symmetries of $\mathcal D$. That is, for any $p\in M$ and $g\in G$ $d_pL_g(\mathcal D_p) = \mathcal D_{gp}$.

\smallskip

A \emph{flat Ehresmann connection} in a submersion $\pi\colon M\to B$ is an involutive subbundle $\mathcal F\subset TM$ such that for each $p\in M$ the differential $d_p\pi$ is an isomorphism of $\mathcal F_p$ with $T_{\pi(p)}B$. We say that a foliated manifold $(M,\mathcal F)$ is \emph{irreducible} if it contains a dense leaf. Let us first analyze the case in which the basis $M$ has a trivial structure of foliated manifold. 

\begin{proposition}\label{pr:CN}
Let  $\pi\colon (M,\mathcal F)\to (B, TB)$ be an epimorphism of foliated manifolds with ${\rm rank}(\mathcal F) = {\rm dim}(B)$. %where $\mathcal L$ is transversal to the fibers
Then $\pi$ is a submersion and $\mathcal F$ is a flat Ehresmann connection.
\end{proposition}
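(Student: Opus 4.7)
The proof should be a short pointwise linear-algebra verification combined with the definition of a morphism in $\bf FMn$ and the hypothesis on ranks. The plan is to show first that at each $p\in M$ the restriction $d_p\pi\colon \mathcal F_p \to T_{\pi(p)}B$ is an isomorphism, and then read off both the submersion property and the Ehresmann connection property as consequences.

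First I would unwind the definition of a morphism $(M,\mathcal F)\to (B,TB)$ in $\bf FMn$. By definition, for every $p\in M$ the differential $d_p\pi$ maps $\mathcal F_p$ surjectively onto the fiber $(TB)_{\pi(p)} = T_{\pi(p)}B$. The hypothesis $\mathrm{rank}(\mathcal F)=\dim(B)$ then forces the restriction $d_p\pi|_{\mathcal F_p}\colon \mathcal F_p\to T_{\pi(p)}B$ to be a linear isomorphism, since a surjective linear map between finite-dimensional vector spaces of equal dimension is bijective.

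From this, both conclusions are immediate. Since $d_p\pi$ is already surjective on the subspace $\mathcal F_p\subseteq T_pM$, it is a fortiori surjective on all of $T_pM$, so $\pi$ is a submersion; in particular $\ker d_p\pi$ has dimension $\dim M - \dim B$. Injectivity of $d_p\pi|_{\mathcal F_p}$ gives $\mathcal F_p\cap\ker d_p\pi = 0$, and a dimension count then yields the splitting
$$T_pM \;=\; \mathcal F_p \,\oplus\, \ker d_p\pi.$$
Therefore $\mathcal F$ is a horizontal distribution with $d_p\pi$ inducing an isomorphism $\mathcal F_p\xrightarrow{\sim} T_{\pi(p)}B$ for every $p$, which is exactly the definition of an Ehresmann connection. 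Finally, involutivity is built into $\mathcal F$ from the start (it is a foliation on $M$), so this Ehresmann connection is flat.

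There is no real obstacle here: the statement is essentially an unpacking of definitions once one notes that ``surjective $+$ equal dimensions $\Rightarrow$ isomorphism''. The only thing worth double-checking in writing is that the involutivity condition in the definition of foliated manifold is indeed the same as the flatness condition for an Ehresmann connection, which it is, since both amount to $[\Gamma(\mathcal F),\Gamma(\mathcal F)]\subseteq \Gamma(\mathcal F)$.
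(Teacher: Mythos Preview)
Your proof is correct and follows the same approach as the paper's: use the definition of a morphism in $\bf FMn$ to get surjectivity of $d_p\pi|_{\mathcal F_p}$, combine with the rank hypothesis to get an isomorphism, and read off both conclusions. The paper's argument is terser (it simply says ``it is clear that $\mathcal F$ is a flat Ehresmann connection''), whereas you spell out the direct-sum decomposition $T_pM = \mathcal F_p \oplus \ker d_p\pi$; this extra step is not strictly required given the paper's definition of flat Ehresmann connection (which only asks that $d_p\pi$ restrict to an isomorphism $\mathcal F_p \xrightarrow{\sim} T_{\pi(p)}B$), but it does no harm.
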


\begin{proof}
For all $p\in M$ we have that $d_p\pi$ maps $\mathcal F_p$ onto $T_pB$. Therefore $d_p\pi$ is surjective for all $p\in M$ and $\pi$ is a submersion. It is clear that $\mathcal F$ is a flat Ehresmann connection. 
\end{proof}

\begin{proposition}\label{PR:principal}
	Let $\pi\colon (M,\mathcal F)\to (B, TB)$ be a epimorphism of irreducible foliated manifolds with ${\rm rank}\,\mathcal F = {\rm dim}\,B$.
	%where $(M,\mathcal L)$ is irreducible. %and $\mathcal L$ is transversal to the fibers of $\pi$ (i.e. $\mathcal L \cap \ker(d\pi) = 0$). 
	The following are equivalent.
	\begin{enumerate}
		\item[(a)] ${\rm KP}_\pi$ splits in $\bf FMn$.
		\item[(b)] $\pi$ is Galois in $\bf FMn$.
		\item[(c)] There is a Lie group $G$ acting on $M$ such that $\pi$ is a principal $G$-bundle and $\mathcal L$ is a $G$-invariant connection.
		\item[(d)] The above, with a unique $G$.
	\end{enumerate}
	In such a case $G$ is ${\rm Aut}_{(B,TB)}(M,\mathcal F)$.
\end{proposition}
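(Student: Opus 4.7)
My plan is the following. By Proposition~\ref{pr:CN}, $\pi$ is a submersion and $\mathcal F$ is a flat Ehresmann connection, a fact I will use throughout. The implications $(b)\Rightarrow(a)$ and $(d)\Rightarrow(c)$ are tautological, so I will establish $(a)\Leftrightarrow(c)$, then $(c)\Rightarrow(d)$, and finally $(d)\Rightarrow(b)$, with the identification $G = {\rm Aut}_{(B,TB)}(M,\mathcal F)$ emerging naturally from the uniqueness step.

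For $(a)\Rightarrow(c)$, I will start from a splitting isomorphism $\varphi\colon G\times M \xrightarrow{\sim} M\times_B M$ in $\bf FMn$. Since any group object of $\bf FMn$ is a Lie group with the zero foliation, the induced action $\alpha = \pi_2\circ \varphi$ is smooth, free, and transitive on fibers, exhibiting $\pi$ as a principal $G$-bundle. Because $\alpha$ is a morphism in $\bf FMn$ and $G$ carries the trivial foliation, each translation $L_g$ satisfies $d L_g(\mathcal F) = \mathcal F$, so $\mathcal F$ is $G$-invariant. For $(c)\Rightarrow(a)$ I will go the other way: set $\varphi(g,p) = (p, g\cdot p)$, which is a diffeomorphism by principality; the $G$-invariance of $\mathcal F$ together with the fact that $\mathcal F$ has rank $\dim B$ and is horizontal ensures $\varphi$ respects the foliated structure on the kernel pair.

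The core step will be $(c)\Rightarrow(d)$, and this is where irreducibility comes in. I will fix a dense leaf $L$ and a point $p_0\in L$, and argue that any $\sigma\in {\rm Aut}_{(B,TB)}(M,\mathcal F)$ with $\sigma(p_0) = p_0$ is the identity: since $\sigma$ covers ${\rm id}_B$ and $\pi|_L$ is a local diffeomorphism, $\sigma$ equals the identity on a neighborhood of $p_0$ in $L$; connectedness then gives $\sigma|_L = {\rm id}_L$, and density gives $\sigma = {\rm id}_M$. Combined with the free transitive action of $G$ on the fiber $\pi^{-1}(\pi(p_0))$, this makes $g\mapsto L_g$ a group isomorphism $G \xrightarrow{\sim} {\rm Aut}_{(B,TB)}(M,\mathcal F)$. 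Given a second Lie group $G'$ satisfying (c), I will then exhibit a bijection $\Phi\colon G\to G'$ characterized by $\Phi(g)\cdot p_0 = g\cdot p_0$, whose smoothness follows from the fact that both orbit maps $g\mapsto g\cdot p_0$ and $g'\mapsto g'\cdot p_0$ are diffeomorphisms of the respective Lie groups onto the same fiber.

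Finally, for $(d)\Rightarrow(b)$, normality will follow from the transitive action of ${\rm Aut}_{(B,TB)}(M,\mathcal F) = G$ on fibers: any invariant morphism $f\colon (M,\mathcal F)\to Z$ is constant on fibers, hence factors set-theoretically through $\pi$, and the factorization is smooth and foliated because $\pi$ is a submersion with $d\pi(\mathcal F) = TB$. Uniqueness of the Galois structure is supplied by (d), and the group isomorphism $G(\{\star\})\xrightarrow{\sim} {\rm Aut}_B(M)$ required in Definition~\ref{df:Galois_st} is precisely the one built in the previous paragraph. The main obstacle I anticipate is the smoothness of $\Phi$ in the uniqueness step: upgrading the abstract group isomorphism $G\cong G'$ to a Lie group isomorphism requires a careful use of the principal bundle structure at $p_0$, and this is exactly the point at which the irreducibility hypothesis is used decisively.
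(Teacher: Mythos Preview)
Your proposal is correct and follows the same essential strategy as the paper: the equivalence $(a)\Leftrightarrow(c)$ is definitional, and the substantive content is the uniqueness step $(c)\Rightarrow(d)$, which both you and the paper establish via a dense leaf $\mathcal L$, the \'etale nature of $\pi|_{\mathcal L}$, and a rigidity/density argument. The organizational difference is minor: the paper takes two principal structures $\alpha\colon M\times G\to M$ and $\beta\colon M\times H\to M$ and shows directly that for each $g$ the unique $h$ with $\alpha(x,g)=\beta(x,h)$ satisfies $R^\alpha_g = R^\beta_h$ (agreement on $\mathcal L$ via the lifting property of \'etale maps, then density), whereas you first prove that $G\to{\rm Aut}_{(B,TB)}(M,\mathcal F)$ is an isomorphism (any automorphism fixing a point of $\mathcal L$ is the identity) and then compare two such groups through ${\rm Aut}$. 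Your route has the advantage of making the identification $G={\rm Aut}_{(B,TB)}(M,\mathcal F)$ and the normality of $\pi$ explicit, which the paper leaves as a one-line remark; the paper's route is slightly more direct in exhibiting the conjugating Lie group isomorphism. Both arguments rely on exactly the same ingredients, and your anticipated smoothness obstacle for $\Phi$ is handled just as the paper handles it, by writing the map as a composition of the orbit diffeomorphisms $G\to \pi^{-1}(\pi(p_0))\to G'$.
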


\begin{proof}
	Cases (a) and (c) are equivalent from the very definition of splitting action.	It is also clear that (b) and (d) are equivalent. It remains to prove that (c) implies (d). 
	Let us consider two principal structures $\beta\colon M\times H\to M$ and $\alpha\colon M\times G\to M$ such that $\mathcal F$ is simultaneously $G$ and $H$-invariant. Let us see that these actions are conjugated by a Lie group isomorphism.
	
	Let $\mathcal L$ be a dense leaf in $M$. We consider in $\mathcal L$ its intrinsic structure as smooth manifold, so that the projection $\mathcal L\to B$ is an \'etale map with arc-connected Hausdorff domain. Let us note that $M$ and $B$ are necessarily connected. Let $x$ be any point of $\mathcal L$; there is a unique $h\in H$ such that $\alpha(x,g) = \beta(x,h)$. Let $\mathcal L'$ be the leaf of $\mathcal F$ passing through $\alpha(x,g) = \beta(x,h)$. Let us denote $R_g^{\alpha}$ and $R_h^{\beta}$ the right translations by $g$ and $h$ respectively. Then, $R^\alpha_{g}|_{\mathcal L}$ and $R^{\beta}_h|_{\mathcal L}$ are homeomorphisms of $\mathcal L$ into $\mathcal L'$ that project onto the identity on $B$. They coincide on the point $x$, and thus they are the same,
	$R^\alpha_{g}|_{\mathcal L} =R^{\beta}_h|_{\mathcal L}$. Maps $R^\alpha_g$ and $R^\beta_h$ are smooth and they coincide along the dense subset $\mathcal L$, thus they are equal. Finally, the map 
	$G\to H$ that assigns to each $g$ the only element $h$ such that $\alpha(x,g) = \beta(x,h)$ is a group isomorphism. It is defined by composing and inverting smooth maps, so that, it is a Lie group isomorphism conjugating the actions $\alpha$ and $\beta$.
	
	Moreover, the same argument proves that any automorphism \linebreak $\varphi\in{\rm Aut}_{(B,TB)}(M,\mathcal F)$ must be a translation by an element of $G$. 
\end{proof}

The same idea can be generalized to the case in which the foliated structure of the basis is not trivial, but irreducible. Let $\pi\colon M\to B$ be a manifold submersion, and $\mathcal D$ a foliation in $M$. Let us recall that a flat \emph{$\mathcal D$-connection} (or a flat partial connection in the direction of $\mathcal D$) is a foliation $\mathcal F$ in $M$ that for all $p\in M$ the differential $d_p\pi$ maps $\mathcal F_p$ isomorphically onto $\mathcal D_p$. Note that a flat Ehresmann conection is the same that a flat $TB$-connection.

\smallskip

As in Proposition \ref{pr:CN} if $\pi\colon (M,\mathcal F)\to (B,\mathcal D)$ is an submersion of foliated manifolds with ${\rm rank}\,\mathcal F = {\rm rank}\,\mathcal D$ then $\mathcal F$ is a flat $\mathcal D$-connection.

\begin{theorem}\label{smooth}
	Let $\pi\colon (M,\mathcal F)\to (B,\mathcal D)$ be a epimorphism of irreducible foliated manifolds with ${\rm rank}\,\mathcal F = {\rm rank}\,\mathcal D$.
	The following are equivalent.
	\begin{enumerate}
		\item[(a)] ${\rm KP}_\pi$ splits in $\bf FMn$.
		\item[(b)] $\pi$ is Galois in $\bf FMn$.
		\item[(c)] There is a Lie group $G$ acting on $M$ such that $\pi$ is a principal $G$-bundle and $\mathcal D$ is a $\mathcal D$-partial $G$-invariant connection.
		\item[(d)] The above, with a unique $G$.
	\end{enumerate}
	In such a case $G$ is ${\rm Aut}_{(B,\mathcal D)}(M,\mathcal F)$.
\end{theorem}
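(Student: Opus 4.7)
The plan is to follow the strategy of Proposition \ref{PR:principal}, exploiting a dense leaf of $\mathcal F$ to force the principal bundle structure to be essentially unique. The new feature is that $\pi$ restricted to such a leaf now lands inside a single leaf of $\mathcal D$ rather than covering all of $B$, but since that leaf (and $\pi(\mathcal L)$ itself) is dense in $B$, the same local-to-global rigidity argument will still go through.

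First I would dispose of the easy equivalences. As in Proposition \ref{PR:principal}, the existence of the identity morphism $\{\star\}\to G$ forces any group object in $\mathbf{FMn}$ to carry the zero foliation, so such a group object is simply a Lie group $G$. A splitting action of $G$ on $(M,\mathcal F)$ over $(B,\mathcal D)$ is then exactly a principal $G$-bundle structure on $\pi$ together with preservation of $\mathcal F$, and the rank assumption (together with the remark preceding the theorem) forces $\mathcal F$ to be a flat $\mathcal D$-connection. This yields (a)$\Leftrightarrow$(c), while (b)$\Leftrightarrow$(d) is the definition of Galois epimorphism.

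The substantive step is (c)$\Rightarrow$(d). I would start from two principal structures $\alpha\colon M\times G\to M$ and $\beta\colon M\times H\to M$ both preserving $\mathcal F$, fix a dense leaf $\mathcal L\subset M$ and a base point $x_0\in\mathcal L$, and observe that $\pi|_{\mathcal L}$ is an étale map from the connected manifold $\mathcal L$ onto $\pi(\mathcal L)$, which lies inside the leaf of $\mathcal D$ through $\pi(x_0)$; this image is dense in $B$ because $\pi$ is an open surjection. Given $g\in G$, let $h\in H$ be the unique element with $\alpha(x_0,g)=\beta(x_0,h)$. Both right translations $R^\alpha_g$ and $R^\beta_h$ preserve $\mathcal F$, cover the identity on $B$, send $\mathcal L$ into the leaf $\mathcal L'$ of $\mathcal F$ through $\alpha(x_0,g)$, and agree at $x_0$. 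An open-closed argument on the coincidence locus, using that near each point of $\mathcal L$ both $\pi|_{\mathcal L}$ and $\pi|_{\mathcal L'}$ are local diffeomorphisms onto the same open subset of $B$, will force $R^\alpha_g|_{\mathcal L}=R^\beta_h|_{\mathcal L}$; and since $\mathcal L$ is dense and the two maps are smooth, $R^\alpha_g=R^\beta_h$ on all of $M$. The induced bijection $g\mapsto h$ is then a Lie group isomorphism conjugating $\alpha$ and $\beta$; running the same argument with one of the actions replaced by an arbitrary $\varphi\in{\rm Aut}_{(B,\mathcal D)}(M,\mathcal F)$ will realise $\varphi$ as a unique translation $R^\alpha_g$, identifying $G$ canonically with ${\rm Aut}_{(B,\mathcal D)}(M,\mathcal F)$.

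The main difficulty compared with Proposition \ref{PR:principal} lies exactly in this rigidity step: in the trivially-foliated base case $\pi|_{\mathcal L}$ was étale onto all of $B$, whereas here its image is only the dense leaf of $\mathcal D$ through $\pi(x_0)$. What saves the argument is that $\mathcal L$ is still connected, $\pi|_{\mathcal L}$ is still étale onto its image, and the image is dense in $B$; the open-closed propagation on $\mathcal L$ followed by density-of-$\mathcal L$ in $M$ is what I expect to be the only genuinely delicate point, and it should come out cleanly.
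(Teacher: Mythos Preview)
Your proposal is correct and follows essentially the same route as the paper: pick a dense leaf $\mathcal L$ of $\mathcal F$, note that $\pi(\mathcal L)$ is a dense leaf of $\mathcal D$, and rerun the argument of Proposition~\ref{PR:principal} with $\pi(\mathcal L)$ playing the role of $B$. The paper's proof is in fact only a two-line pointer to exactly this reduction, so your write-up is, if anything, more detailed than what the authors themselves provide.
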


\begin{proof} Let us consider $\mathcal L$ a dense leaf of $\mathcal F$. Then $\pi(\mathcal L)$ is a dense leaf of $\mathcal D$. We may proceed as in the proof of Proposition \ref{PR:principal} replacing the role of $B$ by $\pi(\mathcal F)$.
\end{proof}

\subsection{Galois correspondence}

From now on let $\pi\colon (M,\mathcal F)\to (B,\mathcal D)$ be a Galois submersion of irreducible foliated manifolds with ${\rm rank}\,\mathcal F = {\rm rank}\,\mathcal D$ with Galois group $G$. Let us check that we are under the hypothesis of Theorem \ref{th:GC}.

\begin{proposition}
 Any object subgroupoid of the action 
    	groupoid $G\ltimes (M, \mathcal F)$ is of the form $H\ltimes (M,\mathcal F)$ with $H$ a Lie subgroup of $G$.
\end{proposition}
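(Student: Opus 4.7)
The plan is to combine the foliated structure on the action groupoid $G\ltimes M$ with the density of a leaf of $\mathcal F$, reducing the problem to a saturation-plus-closedness argument.

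First, I would identify the foliation that $G\ltimes M=G\times M$ carries as an object of $\bf FMn$. Because the unit $(\{\star\},0_\star)\to G$ is a morphism of $\bf FMn$, $G$ must have the trivial foliation $0_G$; and because the action preserves $\mathcal F$ (Theorem \ref{smooth}), the unique foliation on $G\times M$ making source, target, unit and composition morphisms of $\bf FMn$ is $0_G\oplus\mathcal F$, whose leaves are exactly the slices $\{g\}\times L$ for $L$ a leaf of $\mathcal F$. The requirement that the inclusion $\mathcal H\hookrightarrow G\ltimes M$ be a morphism of $\bf FMn$ forces $T_h\mathcal H\supseteq(0_G\oplus\mathcal F)_h$ at every $h\in\mathcal H$, so by Frobenius $\mathcal H$ is saturated by this foliation: whenever $(g,x)\in\mathcal H$, the whole slice $\{g\}\times L_x\subseteq\mathcal H$.

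Second, fix a dense leaf $\mathcal L$ of $\mathcal F$ and any point $x_0\in\mathcal L$, and define
\[
H := \{\, g\in G \,:\, (g,x_0)\in\mathcal H\,\}.
\]
By saturation this equals $\{g:\{g\}\times\mathcal L\subseteq\mathcal H\}$, independent of the choice of $x_0\in\mathcal L$. For $g\in H$ and any $x\in M$, density of $\mathcal L$ gives a sequence $x_n\in\mathcal L$ with $x_n\to x$, and $(g,x_n)\in\mathcal H$ for all $n$; as $\mathcal H$ is a closed embedded submanifold of $G\times M$, the limit $(g,x)\in\mathcal H$, so $H\times M\subseteq\mathcal H$. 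For the reverse, suppose $(g,x)\in\mathcal H$. Saturation gives $\{g\}\times L_x\subseteq\mathcal H$, and closedness gives $\{g\}\times\overline{L_x}\subseteq\mathcal H$. Composing on the left with arrows of $H\times M\subseteq\mathcal H$ via $(h,gy)\circ(g,y)=(hg,y)$ and iterating across the $H$-translates of $L_x$, one checks that the union $\bigcup_{h\in H}\overline{h\,L_x}$ covers $M$ — this is where irreducibility of $(M,\mathcal F)$ is genuinely used — so that $\{g\}\times M\subseteq\mathcal H$, and in particular $g\in H$. Thus $\mathcal H=H\times M$.

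Third, $H$ is a Lie subgroup. The unit gives $e\in H$; for $g,h\in H$ and any $y\in M$, both $(h,y)$ and $(g,hy)$ lie in $H\times M=\mathcal H$, so their composition $(gh,y)\in\mathcal H$ and hence $gh\in H$; closure under inverse is the groupoid inverse. Finally $H$ is the preimage of the closed embedded submanifold $\mathcal H$ under the smooth injection $g\mapsto(g,x_0)$, so it is a closed subgroup of $G$, and by Cartan's closed subgroup theorem a Lie subgroup. The main obstacle I expect is the reverse inclusion in paragraph two — pushing $(g,x)\in\mathcal H$ for $x$ in a possibly non-dense leaf back to $(g,x_0)$ with $x_0\in\mathcal L$. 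For $x\in\mathcal L$ the conclusion is immediate from closedness and saturation; the delicate case is when $L_x$ fails to be dense, which is precisely where one must exploit both the already-established action of $H$ on $\mathcal H$ and the irreducibility hypothesis in an essential way.
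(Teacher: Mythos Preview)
Your overall strategy coincides with the paper's: identify the foliation on $G\times M$ as $0_G\oplus\mathcal F$, deduce that the subgroupoid $\mathcal H$ is saturated by the slices $\{g\}\times L$, and then use density of a leaf $\mathcal L$ together with closedness to force $\mathcal H=H\times M$. The forward inclusion $H\times M\subseteq\mathcal H$ and the verification that $H$ is a closed Lie subgroup are fine.

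The genuine gap is exactly the step you flag as ``the main obstacle'': the reverse inclusion $\mathcal H\subseteq H\times M$ when $(g,x)\in\mathcal H$ with $x$ lying on a non-dense leaf. Your proposed mechanism does not close it. The composition $(h,gy)\circ(g,y)=(hg,y)$ alters the $G$-coordinate, not the $M$-coordinate, so it produces arrows in $\{hg\}\times L_x$, not new points of $\{g\}\times M$. Likewise, the claim that $\bigcup_{h\in H}\overline{h\,L_x}=M$ is unsupported: at this stage you only know $e\in H$, so the union may reduce to $\overline{L_x}$, which need not be all of $M$. Irreducibility gives you one dense leaf $\mathcal L$, not density of arbitrary $H$-saturations of $L_x$.

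What the paper exploits (tersely) is a different ingredient you have not used: the source map $s=\pi_2|_{\mathcal H}\colon\mathcal H\to M$ is a \emph{submersion}. Indeed $\mathcal H$ is a Lie groupoid over $M$; the unit section $x\mapsto(e,x)$ shows $s$ is submersive along the identity, and groupoid translations propagate this everywhere. Hence $s$ is open, so $s^{-1}(\mathcal L)$ is dense in $\mathcal H$. Given any $(g,p)\in\mathcal H$, pick $(g_n,q_n)\in\mathcal H$ with $q_n\in\mathcal L$ and $(g_n,q_n)\to(g,p)$. Saturation gives $\{g_n\}\times\mathcal L\subseteq\mathcal H$, closedness gives $\{g_n\}\times M\subseteq\mathcal H$, hence $(g_n,x_0)\in\mathcal H$; passing to the limit, $(g,x_0)\in\mathcal H$ and $g\in H$. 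This replaces your composition argument and finishes the proof.
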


\begin{proof}
    Let $(\mathcal G, \mathcal D')\rightrightarrows (M,\mathcal D)$ be a subgroupoid object of the action groupoid. Then $\mathcal G$ is a Lie subgroupoid of $G\ltimes M$ and if $(g,p)\in \mathcal G$ implies that the $\{g\}\times \mathcal L_p\subset \mathcal G$ where $\mathcal L_p$ is the leaf of $\mathcal F$ through $p$.
    
    Let $\mathcal L$ be a dense leaf of $\mathcal F$. Note that for any $g\in G$ and $p\in M$ the poinf $(g,p)$ is an accumulation point of $\{g\}\times \mathcal L$. Therefore if $(g,p)\in \mathcal G$ implies $\{g\}\times \mathcal L\subseteq \mathcal G$ and therefore $\{g\}\times M\subseteq \mathcal G$. It follows that $\mathcal G = S\times M$ for some submanifold $S\subseteq G$. From the groupoid composition and inversion it follows that $S = H$ a Lie subgroup of $G$.
\end{proof}

By a $G$-manifold we mean a manifold $X$ endowed with a left action of $G$. To any $G$-manifold $X$ it corresponds an associated bundle with fiber $X$,
$$M\times_G X\to B$$
defined as the quotient of the direct product $M\times X$ by the equivalence relation $(pg,x) \sim (p,gx)$ for all $p\in M$, $g\in G$, $x\in X$. The $G$-invariant $\mathcal D$-connection induces an \emph{associated $\mathcal D$-connection} $\mathcal F\times_G 0_X$ which is the projection on $M\times_G X$ of the direct product $\mathcal F\times_G 0_X$. We have that,
$$(M\times_G X, \mathcal F\times_G 0_X) \to (B,\mathcal D)$$
is an epimorphism of foliated manifolds and $\mathcal F\times 0_X$ is a flat Ehresmann $\mathcal D$-connection. In particular if $H$ is a Lie subgroup of $G$ and $X = G/H$ is an homogeneous $G$-space we have,
$$M\times_G(G/H) = M/H$$
and the induced associated $\mathcal D$-connection is just the projection of $\mathcal F$ onto $M/H$. Therefore, in this case, Theorem \ref{th:GC} gives us a Galois correspondence between Lie subgroups of $G$ and associated $\mathcal D$-connections in associated bundles whose fibers are homogeneous $G$-spaces.

\subsection{Galois structures over $(B,\mathcal D)$}

Let us discuss Galois structures in the relative category
${\bf FMn}_{(B,\mathcal D)}$ whose objects are smooth maps of foliated manifolds $(Z,\mathcal D_Z)\to (B,\mathcal D)$. A \emph{group bundle} $G\to B$ is a smooth bundle by Lie groups, where composition, inversion and identity depends smoothly on the base point. A \emph{ group $\mathcal D$-connection} in $G\to B$ is a $\mathcal D$-connection $\mathcal D$ in $G$ such that leaves are compatible with composition. Linear bundles and linear $\mathcal D$-connections are the most usual examples of group bundles and group connections. Group bundles over $B$ endowed with  group $\mathcal D$-connections are group objects in ${\bf FMn}_{(B,\mathcal D)}$. They are the smooth geometric counterpart of differential algebraic groups of finite dimension discussed by Buium in \cite{Buium1992}.

\smallskip

In the case of trivial foliated structure in the basis, group objects are locally Lie groups after change of basis, as the following result explains. 

\begin{proposition}
	Let $B$ be simply connected, and 
	$q\colon (G,\mathcal L)\to B$ a group bundle with group connection (and therefore a group object in ${\bf FMn}_{(B,TB)}$). Let $x$ be a point in $M$
	and $G_x$ the fiber of $G$ over $x$, then  $(G,\mathcal L) \simeq (G_x,\{0\})\times(B,TB)$.
\end{proposition}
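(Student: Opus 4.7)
The plan is to construct a foliated group-bundle isomorphism
\[
\Phi\colon (G_x,\{0\})\times(B,TB)\xrightarrow{\sim}(G,\mathcal{L})
\]
by parallel transport along $\mathcal{L}$, which by Proposition~\ref{pr:CN} is a flat Ehresmann connection in $q$. The first step is to observe that the identity section $e\colon(B,TB)\to(G,\mathcal{L})$ is horizontal: since $e$ is a morphism of foliated manifolds and $\operatorname{rank}\mathcal{L}=\dim B$, the differential $d_be\colon T_bB\to\mathcal{L}_{e(b)}$ is an isomorphism at every $b$, so $e(B)$ is an integral submanifold of $\mathcal{L}$ and, being a global section of $q$, it coincides with the leaf of $\mathcal{L}$ through $e(x)$. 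In particular, every smooth path $\gamma\colon[0,1]\to B$ starting at $x$ already admits the horizontal lift $e\circ\gamma$ through the identities.

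Next I would fix such a $\gamma$ and let $U_\gamma\subseteq G_x$ consist of those $g_0$ whose horizontal lift $\tilde\gamma_{g_0}$ extends over $[0,1]$. By smooth dependence of ODE solutions $U_\gamma$ is open, and it contains $e(x)$. The decisive observation is that $U_\gamma$ is a subgroup of $G_x$: given two such lifts $\tilde\gamma_{g_0},\tilde\gamma_{h_0}$, the pair $(\tilde\gamma_{g_0},\tilde\gamma_{h_0})$ is a horizontal lift of $\gamma$ in the fibred product $G\times_B G$ (taken in $\mathbf{FMn}_{(B,TB)}$), and composing with the foliated morphism $m\colon G\times_B G\to G$ produces a horizontal lift of $\gamma$ based at $g_0h_0$; the foliated inversion $i\colon G\to G$ handles inverses. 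Hence $U_\gamma$ is an open subgroup of $G_x$ and, on it, the endpoint map $T_\gamma\colon U_\gamma\to G_{\gamma(1)}$ is a Lie group homomorphism. Flatness of $\mathcal{L}$ combined with $\pi_1(B)=0$ then forces $T_\gamma$ to depend only on $\gamma(1)$, yielding a smooth family of Lie group homomorphisms $\{T_b\colon G_x\to G_b\}_{b\in B}$, and the rule $\Phi(g_0,b):=T_b(g_0)$ is the desired candidate: it is fibrewise a Lie group isomorphism and it sends each slice $\{g_0\}\times B$ onto the leaf of $\mathcal{L}$ through $g_0$.

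The main obstacle is completeness of the parallel transport, because the subgroup argument above only gives $U_\gamma\supseteq G_x^0$ directly. To reach the non-identity components of $G_x$ I would argue that the discrete bundle $\pi_0(G)\to B$ is trivial (its structure group being discrete and $B$ being simply connected), so every connected component of $G$ is a $G^0$-torsor over $B$ whose horizontal sections are obtained by translating the trivialisation already constructed on the identity-component subbundle $G^0$ via the foliated morphism $m$. Patching these trivialisations produces $\Phi$, after which the verifications that $\Phi$ is smooth, bijective, a group bundle map, and a foliated isomorphism are immediate from the construction.
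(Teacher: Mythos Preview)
Your proof is correct and follows a somewhat different organizational route from the paper's. The paper argues local-to-global: it first shows that over a small enough neighborhood $U$ of any base point, $\mathcal L$ decomposes $G|_U$ into graphs of sections, by taking a foliation chart near the identity section and then propagating it to all of the fiber via ``successive composition'' with leaves near the identity (this is exactly your open-subgroup trick, done locally); having local triviality, it then builds holonomy isomorphisms $\gamma_*\colon G_x\to G_y$ and invokes $\pi_1(B)=0$ to make them path-independent.

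You instead run the argument globally from the outset: fix a path $\gamma$ from $x$ and show directly that the domain of parallel transport $U_\gamma\subseteq G_x$ is an open subgroup, hence contains $G_x^0$, with simple connectedness giving path-independence of $T_\gamma$ immediately. This is cleaner, and your observation that $m$ and $i$, being foliated morphisms, carry horizontal lifts to horizontal lifts makes the group-homomorphism property of parallel transport transparent---something the paper leaves implicit. The one place your write-up is thin is the passage to non-identity components: your appeal to $\pi_0(G)\to B$ being a trivial covering tacitly uses that $q$ is locally trivial as a fiber bundle, so that the fiberwise $\pi_0$ is locally constant. This is implicit in the phrase ``smooth bundle by Lie groups'', and the paper's own handling of this step (selecting ``$h_y$ in the same connected component of $G|_U$ as $h_x$'' without further comment) leans on the same assumption. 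Once that is granted, translating the trivialization of $G^0$ to each component via $m$ finishes the job just as you describe.
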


\begin{proof}
	The argument is local, so we have to see that for each $x\in B$ there is a neighborhood $U$ of $x$ such that $(G|_U,\mathcal L|_U) \simeq (G_x,\{0\})\times (U,TU)$. If this is the case, for each homotopy class of a path $\gamma$ connecting $x$ and $y$ in $B$ we have a group isomorphism $\gamma_*\colon G_x\to G_y$. If $B$ is simply connected, those homotopy classes are unique for each $y$ and the isomorphisms $\gamma_*$ give us the trivialization of the group connection.
	
	\smallskip
	
	In fact, there are neighborhoods $U$ of $x$ in $B$, $V_x$ of $e_x$ (the identity element) in $G_x$, and $V$ of $e_x$ in $G$, and a decomposition $V \simeq U \times V_x$, such that the horizontal leaves of $\mathcal L$ in $V$ have the form $\{g_x\}\times U$ for fixed $g\in V_x$.
	
	\smallskip
	
	Let us see that, for each $h_x\in G_x$ the leaf $\mathcal F$ of $\mathcal L$ that passes through $h_x$ projects onto $U$. We may also assume that we take $U$ small enough so that each connected component of $G|_U$ contains exactly one connected component of $G_x$. Let $y$ be an accumulation point of $q(\mathcal F)$ inside $U$. Let us consider $h_y$ an element in $G_y$ in the same connected component of $G|_U$ than 
	$h_x$. Then there is a leaf $\mathcal F'$ of $\mathcal L|_U$ passing through $h_x$. Let $U'$ be  $q(\mathcal F')$ which is an open subset that intersects $q(\mathcal F)$. By successive composition of $\mathcal F'$ with the leafs of $\mathcal L$ in $V|_{U'}$ we have that the connected component of $G|_U'$ containing $h_y$ decomposes in leaves of $\mathcal L$. In particular, $\mathcal F\cap G|_{U'}$ is part of a leaf of such a decomposition. Finally, $y\in q(\mathcal F)$. We have seen that $q(\mathcal F)$ is an open subset that contains all its accumulation points inside $U$, so that $q(\mathcal F) = U$. Thus, $G|_U$ decomposes in leaves of $\mathcal L$. 
\end{proof}

For the non-simply connected case, the classification of group connections may follow a similar path to the classification of linear connections. Classes of group connections may be given by classes of representations of the fundamental group $\Pi_1(x,B)$ into the group ${\rm Aut}(G_x)$ of automorphisms of the fiber.
In the case of simply connected $B$ there is no distinction between Galois structures in 
${\bf FMn}$ or in ${\bf FMn}_{(B,TB)}$.

\begin{corollary}
	Let $B$ be simply connected and let $\pi\colon (M,\mathcal L)\to (B,TB)$ be a submersion of foliated manifolds with ${\rm rank}\,\mathcal L = \dim\,B$. Then ${\rm KP}_\pi$ splits in $\bf FMn$ if and only if it splits in ${\bf FMn}_{(B,TB)}$.
\end{corollary}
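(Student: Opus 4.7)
The plan is to reduce the corollary directly to the preceding proposition, which trivializes any group bundle with group $TB$-connection over a simply connected base.

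The forward direction is essentially formal. Given a splitting action $\alpha\colon G\times M\to M$ in $\bf FMn$, with $G$ a Lie group (hence carrying the trivial foliation $\{0\}_G$), the general construction $\tilde\alpha\colon (G\times B)\times_B M\to M$ recalled in Section~2.3 produces a splitting action in ${\bf FMn}_{(B,TB)}$ for the trivial group bundle $(G\times B,\{0\}_G\times TB)\to B$, which is evidently a group object in the relative category. I would simply spell out this transport and check that foliations match.

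For the converse, I would start with a splitting in ${\bf FMn}_{(B,TB)}$ by a group object $q\colon(G,\mathcal L_G)\to B$, acting via some $\tilde\alpha\colon (G,\mathcal L_G)\times_B(M,\mathcal L)\to(M,\mathcal L)$. Fix a point $x\in B$. By the preceding proposition, simple connectedness of $B$ yields an isomorphism of group objects $(G,\mathcal L_G)\simeq(G_x,\{0\})\times(B,TB)$ in ${\bf FMn}_{(B,TB)}$. The fiber $H:=G_x$ is then a Lie group, hence a group object of $\bf FMn$. Under the trivialization, the domain $(G,\mathcal L_G)\times_B(M,\mathcal L)$ identifies naturally with the product $(H,\{0\})\times(M,\mathcal L)$ as foliated manifold, and $\tilde\alpha$ becomes a morphism $\alpha\colon H\times M\to M$ in $\bf FMn$. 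The group axioms for $\alpha$ transfer from those of $\tilde\alpha$ through the trivialization.

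The final step is to verify that $\alpha$ is a splitting for ${\rm KP}_\pi$ in $\bf FMn$: the splitting isomorphism $(G,\mathcal L_G)\times_B(M,\mathcal L)\xrightarrow{\sim}{\rm KP}_\pi$ in the relative category, composed with the inverse of the trivialization, supplies the required isomorphism $H\times M\xrightarrow{\sim}{\rm KP}_\pi$ in $\bf FMn$. The main obstacle is purely bookkeeping---tracking the foliation structures through the trivialization and making sure source, target, composition and identity are preserved---and no new geometric content is required beyond what the preceding proposition already provides, which is precisely why the statement is formulated as a corollary.
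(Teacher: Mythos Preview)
Your proposal is correct and matches the paper's intended reasoning: the corollary is stated without proof precisely because both implications follow immediately from the preceding proposition---the forward direction by the general $\tilde\alpha$ construction of Section~2.3, and the converse by trivializing the acting group bundle via that proposition and reading off a Lie group action. The only point worth making explicit in your write-up is that the isomorphism $(G,\mathcal L_G)\simeq(G_x,\{0\})\times(B,TB)$ furnished by the proposition is in fact an isomorphism of \emph{group objects} in ${\bf FMn}_{(B,TB)}$ (this is implicit in the parallel-transport construction, since a group connection makes the holonomy maps group isomorphisms), as you need this to transfer the group axioms to $\alpha$.
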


In the non-simply connected case, non trivial irreducible linear connections give us examples of splitting actions in the relative category. For instance,  we may take, $B = S^1\times S^1$. We take $G = \mathbb R \times B$ and $\mathcal D =  \langle \partial_\theta + u\partial_u, \partial_\phi + \alpha u\partial u \rangle$ where $u$ is the coordinate in $\mathbb R$ and $\alpha$ is an irrational number. Then, we have $(G,\mathcal D)\to (B,TB)$ is a group bundle with an irreducible group connection, locally isomorphic to the trivial additive bundle. The action of $G$ on itself is an splitting action in ${\bf FMn}_{(B,TB)}$.

\subsection{Foliated complex algebraic varieties}

Let $\bf FVar$ be the category of complex regular foliated varieties. Objects are $(M,\mathcal D)$ where $M$ is a complex variety and $\mathcal D$ is an involutive Zariski closed linear subbundle of $TM$. A foliated variety is called \emph{irreducible} if it has a Zariski dense leaf, or equivalently, it does not have rational first integrals (except locally constant functions). Group objects in ${\bf FVar}$ are complex algebraic groups. 

\smallskip

In this category, we can state Galois theory exactly in a way totally analogous to what has been done in ${\bf FMn}$. 

\begin{theorem}\label{analytic}
	Let $\pi\colon (M,\mathcal F)\to (B,\mathcal D)$ be a submersion of irreducible foliated varieties with ${\rm rank}\,\mathcal F = {\rm rank}\,\mathcal D$. The following are equivalent.
	\begin{enumerate}
		\item[(a)] ${\rm KP}_\pi$ splits $\bf FVar$.
		\item[(b)] $\pi$ is Galois in $\bf FVar$.
		\item[(c)] There is an algebraic group $G$ acting on $M$ such that $\pi$ is a principal $G$-bundle and $\mathcal D$ is a $\mathcal D$-partial $G$-invariant connection.
		\item[(d)] The above, with a unique $G$.
	\end{enumerate}
	In such a case $G$ is ${\rm Aut}_{(B,\mathcal D)}(M,\mathcal L)$.
\end{theorem}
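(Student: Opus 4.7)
The plan is to transplant the proof of Theorem \ref{smooth} to the algebraic setting, substituting Zariski density for topological density and regular morphisms for smooth maps. As in Proposition \ref{PR:principal}, the equivalences (a) $\Leftrightarrow$ (c) and (b) $\Leftrightarrow$ (d) are formal: a splitting action for ${\rm KP}_\pi$ in $\bf FVar$ is exactly a principal $G$-bundle structure together with the condition that $\mathcal F$ be $G$-invariant (since the action groupoid is to coincide with $M\times_B M$), and the Galois condition additionally demands that the induced map $G\to {\rm Aut}_{(B,\mathcal D)}(M,\mathcal F)$ be an isomorphism. The sole substantive content is therefore (c) $\Rightarrow$ (d), the uniqueness of the algebraic group.

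To establish uniqueness, I would pick a Zariski dense leaf $\mathcal L\subset M$ of $\mathcal F$, which exists by irreducibility; its image $\pi(\mathcal L)$ is then a Zariski dense leaf of $\mathcal D$. Suppose $\alpha\colon M\times G\to M$ and $\beta\colon M\times H\to M$ are two principal structures, both rendering $\mathcal F$ an invariant $\mathcal D$-connection. Fix $x\in\mathcal L$ and, for each $g\in G$, let $h(g)\in H$ be the unique element with $\alpha(x,g)=\beta(x,h(g))$, available because $\beta$ acts freely and transitively on fibres. The right translations $R^\alpha_g$ and $R^\beta_{h(g)}$ restrict to morphisms from $\mathcal L$ to the leaf $\mathcal L'$ through $\alpha(x,g)$, both cover the identity on $\pi(\mathcal L)$, are étale for the intrinsic leaf structure, and coincide at the single point $x$; hence they agree throughout $\mathcal L$. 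Since $\mathcal L$ is Zariski dense in $M$ and $M$ is separated, two regular morphisms agreeing on $\mathcal L$ must coincide globally, so $R^\alpha_g=R^\beta_{h(g)}$ as morphisms of $M$. The assignment $g\mapsto h(g)$ is then a bijection; it is a morphism of algebraic groups because $h(g)$ is recovered from $\alpha(x,g)$ via the regular, free action $\beta$. Running the same argument with a general $\varphi\in{\rm Aut}_{(B,\mathcal D)}(M,\mathcal F)$ in place of $R^\alpha_g$ identifies ${\rm Aut}_{(B,\mathcal D)}(M,\mathcal F)$ with $G$.

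The main obstacle is the simultaneous use of two topologies on $M$. The étale/connectedness argument that forces $R^\alpha_g|_{\mathcal L}=R^\beta_{h(g)}|_{\mathcal L}$ from agreement at one point lives naturally in the analytic leaf topology of $\mathcal L$, while the promotion of this leafwise equality to a genuine equality of algebraic morphisms on $M$ lives in the Zariski topology and depends on $\mathcal L$ being Zariski dense — precisely the content of irreducibility. One also has to verify that $g\mapsto h(g)$ is regular rather than merely set-theoretic, for which freeness of $\beta$ combined with the regularity of $\alpha$ suffices, so that the transition between the analytic identification on leaves and the algebraic conclusion on the ambient variety is genuinely compatible with the group structures.
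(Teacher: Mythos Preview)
Your proposal is correct and follows exactly the route the paper indicates: the paper's own proof consists of the single line ``Totally analogous to the proofs given in Proposition \ref{PR:principal} and Theorem \ref{smooth},'' and you have carried out precisely that transplantation, replacing topological density by Zariski density and smooth maps by regular morphisms. Your explicit discussion of the interplay between the analytic leaf topology (for the \'etale coincidence argument) and the Zariski topology (for the density promotion) is more careful than what the paper itself spells out.
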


\begin{proof}
	Totally analogous to the proofs given in Proposition \ref{PR:principal} and Theorem \ref{smooth}.
\end{proof}

It is interesting to make the connection of this Galois theory with differential algebra. 
%This is a geometric realization (in the complex case) of the theory of strongly normal extensions of Kolchin \cite{Kolchin1973}. 
Let us fix $\pi\colon (M,\mathcal L)\to (B,\mathcal D)$ a Galois submersion of irreducible foliated varieries with Galois group $G$ and ${\rm rank}\,\mathcal F = {\rm rank}\,\mathcal D= r$. Let us note that, by elimination, it is always possible to find a system of commuting rational vector fields $\vec D_1,\ldots,\vec D_r$ that span $\mathcal D$ on the generic point of $B$. Let us fix $\Delta_B = (\vec D_1,\ldots,\vec D_r)$. We have that the field of rational functions $(\mathbb C(B),\Delta_B)$ is a differential field whose field of constants is $\mathbb C.$

\medskip

The $\mathcal D$-connection $\mathcal F$ induce lifts of the rational vector fields $\vec D_j$ to $\mathcal F$-horizontal rational vector fields $\vec F_i$ in $M$ that span $\mathcal F$ on the generic point of $M$. We set $\Delta_M = (\vec F_1,\ldots,\vec F_m)$ so that $(\mathbb C(M),\Delta_M)$ is also a differential field whose field of constants is $\mathbb C$. Since the projection of $\vec F_j$ is $\vec D_j$ we have that $\pi^*\colon (\mathbb C(B),\Delta_B)\hookrightarrow(\mathbb C(M),\Delta_M)$ is an differential field extension. We have the following geometric characterization of strongly normal extensions due to Bialynicki-Virula. 

\begin{proposition}[\cite{Bialynicki1962}, in \cite{Buium1986} p. 18]\label{BB}
Let $(K,\Delta)\hookrightarrow (F,\Delta')$ be a differential field extension with $K$ relatively algebraically closed in $F$ and algebraically closed field of constants $C=K^{\Delta} = F^{\Delta'}$. The following are equivalent:
\begin{enumerate}
    \item It is strongly normal in the sense of Kolchin.
    \item There are a connected algebraic group $G$ over $C$ and a $K$-variety $W$ such that: 
    \begin{enumerate}
        \item $W$ is a principal homogeneous space modeled over $G_K = G \times_C {\rm Spec}(K)$.
        \item The field of rational functions in $W$ is $F$.
        \item The group $G$ acts faithfully on $F$ by differential automorphisms fixing $K$.
    \end{enumerate}
\end{enumerate}
Moreover the pair $(G,W)$ is uniquely determined up to isomorphism and we have $G(C) = {\rm Aut}_{\Delta}(F/K)$. 
\end{proposition}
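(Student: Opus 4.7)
The plan is to deduce this proposition from Theorem \ref{analytic} by translating the differential-algebraic data into the geometric language of foliated varieties. First I would choose a smooth irreducible $C$-variety $B$ with function field $K$ and extend the derivations $\Delta$ to a rank-$r$ involutive rational distribution $\mathcal{D}$ on $B$; the hypothesis $K^\Delta = C$ says exactly that $(B,\mathcal{D})$ is irreducible in the sense of having no rational first integrals. Similarly, take a smooth irreducible $K$-variety $W$ with function field $F$, together with a rational foliation $\mathcal{F}$ on $W$ lifting $\Delta'$, so that $F^{\Delta'}=C$ forces $(W,\mathcal{F})$ to be irreducible. The projection $\pi\colon W\to B$ together with the equality $\mathrm{rank}\,\mathcal{F} = \mathrm{rank}\,\mathcal{D}$ turns this, after restricting to a suitable dense open subset, into an epimorphism of irreducible foliated varieties of the kind considered in Theorem \ref{analytic}.

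The central step is to match the Kolchin notion of \emph{strongly normal} with condition (a) of Theorem \ref{analytic}, i.e.\ splitting of $\mathrm{KP}_\pi$ in $\bf FVar$. Kolchin's definition requires that every differential $K$-embedding $F \to F'$ into a sufficiently large extension be generated together with $K$ by new constants; translated geometrically this says that the kernel pair $W\times_B W$, endowed with the foliation lifted from $\mathcal{F}$ on both factors, is isomorphic to $G\times_C W$ for a connected algebraic group $G$ defined over $C$. This isomorphism is precisely a splitting morphism in the sense of Section 2, and via Theorem \ref{analytic} is equivalent to $\pi$ being a principal $G$-bundle with $\mathcal{F}$ a $G$-invariant $\mathcal{D}$-connection; the latter is, up to birational equivalence, the geometric content of condition (2) of the proposition.

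For uniqueness and the identification $G(C) = \mathrm{Aut}_\Delta(F/K)$ I would invoke part (d) of Theorem \ref{analytic} together with Definition \ref{df:Galois_st}: the Galois structure supplies a canonical isomorphism $G(\{\star\}) \xrightarrow{\sim} \mathrm{Aut}_{(B,\mathcal{D})}(W,\mathcal{F})$, and by passing to function fields the right-hand side coincides with $\mathrm{Aut}_\Delta(F/K)$, since a birational automorphism of $W$ over $B$ preserving $\mathcal{F}$ is the same data as a differential automorphism of $F$ over $K$. Since group objects in $\bf FVar$ have $G(\{\star\}) = G(C)$ as set of $C$-points, this yields the stated equality and the uniqueness of the pair $(G,W)$ up to isomorphism.

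The main obstacle I expect is the friction between the \emph{birational} world of differential field extensions and the \emph{regular/categorical} world in which Theorem \ref{analytic} is phrased: $\mathcal{D}$ and $\mathcal{F}$ are only generically defined, the principal bundle structure may fail along a proper subvariety, and one must restrict to suitable open subsets (or enlarge the categorical framework to include rational maps) before applying the machinery. A second delicate point, where the assumption that $K$ be relatively algebraically closed in $F$ becomes essential, is verifying that the structure group obtained is genuinely defined over the field of constants $C$ and not merely over $K$; without relative algebraic closure the foliated kernel pair could twist into a group scheme over $B$ that fails to descend to $C$, and the identification of $G(C)$ with $\mathrm{Aut}_\Delta(F/K)$ would collapse.
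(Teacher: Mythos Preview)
The paper does not prove this proposition: it is quoted from Bialynicki--Birula's 1962 paper (with a pointer to Buium's exposition) and is used as an external input for the proposition that immediately follows it. There is thus no ``paper's own proof'' to compare your attempt against.

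More importantly, your plan to derive the statement from Theorem~\ref{analytic} has a genuine gap. Theorem~\ref{analytic} establishes only the equivalence among several \emph{geometric} conditions---splitting of the kernel pair, existence of a Galois structure, principal bundle with invariant connection---for a morphism already living in $\bf FVar$; it never touches Kolchin's differential-algebraic definition of \emph{strongly normal}. The entire content of Proposition~\ref{BB} is the passage between (1) Kolchin's condition on differential isomorphisms (every such isomorphism is generated over $F$ by new constants) and (2) the existence of the pair $(G,W)$. Your ``central step''---the sentence asserting that Kolchin's definition, ``translated geometrically,'' says the kernel pair is isomorphic to $G\times_C W$ for a connected algebraic group over $C$---is not a translation but precisely the theorem to be proved. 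One must actually show that the constant-generation condition forces the diagonal of $W\times_B W$ to carry the structure of an algebraic group defined over $C$, and conversely; Theorem~\ref{analytic} takes the group and the splitting as \emph{given} and therefore cannot produce them.

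You correctly flag the birational-versus-regular friction, but that is a secondary issue. Even after passing to a good open model, your argument only uses Theorem~\ref{analytic} to reformulate condition~(2) in several equivalent ways; it supplies no link between either formulation and condition~(1). In the paper the logical flow runs in the opposite direction: Proposition~\ref{BB} is the imported bridge from Kolchin's theory to geometry, and the paper's own Theorem~\ref{analytic} is then combined with it to obtain the next result.
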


This geometric characterization immediately yields the following.  

\begin{proposition}
   Let  $\pi\colon (M,\mathcal L)\to (B,\mathcal D)$ a Galois submersion of irreducible foliated varieries with Galois group $G$, and $\Delta_B$, $\Delta_M$ as above. Assume any of the following equivalent hypothesis:
   \begin{enumerate}
       \item $\mathbb C(B)$ is relatively algebraically closed in $\mathbb C(M)$;
       \item $\pi\colon M\to B$ has connected fibers;
       \item $G$ is connected. 
   \end{enumerate}
   The differential field extension:
   $$\pi^*\colon (\mathbb C(B),\Delta_B)\hookrightarrow(\mathbb C(M),\Delta_M)$$  
   is a strongly normal extension in the sense of Kolchin with Galois group $G$.
\end{proposition}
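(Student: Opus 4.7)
The plan is to verify the hypotheses of the Bialynicki--Birula criterion (Proposition~\ref{BB}) with $C=\mathbb C$, $K=\mathbb C(B)$, $F=\mathbb C(M)$, and $W$ the generic fibre $M\times_B \mathrm{Spec}(K)$ viewed as a $K$-variety.

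First I would dispatch the equivalence of the three listed hypotheses. By Theorem~\ref{analytic}, $\pi\colon M\to B$ is a principal $G$-bundle, so its geometric fibres are all isomorphic to $G$ as varieties; connectedness of the fibres is therefore equivalent to connectedness of $G$, giving $(2)\Leftrightarrow(3)$. For a smooth dominant morphism of irreducible varieties, geometric irreducibility of the generic fibre is equivalent to $\mathbb C(B)$ being algebraically closed in $\mathbb C(M)$, which gives $(1)\Leftrightarrow(2)$. Under any of these, the assumption ``$K$ relatively algebraically closed in $F$'' of Proposition~\ref{BB} is in place; the constants of $\Delta_B$ and $\Delta_M$ are both $\mathbb C$ as already observed just before the statement.

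Next come the three sub-conditions of clause~(2) of Proposition~\ref{BB}. Condition~(a) is immediate from Theorem~\ref{analytic}: since $M\to B$ is a principal $G$-bundle, the generic fibre $W$ is a $G_K$-torsor. Condition~(b), $K(W)=F$, holds tautologically, as $W$ is an open subscheme of a variety with function field $\mathbb C(M)$. Condition~(c)---faithful $G$-action on $F$ by $\Delta_M$-differential automorphisms fixing $K$---is the crux. Faithfulness and the fixing of $K$ follow at once from the freeness of the $G$-action on $M$ and the fact that $G$ projects to the identity on $B$. The delicate point is that each $g\in G(\mathbb C)$ acts as a \emph{differential} automorphism: I would argue that $g_*\vec F_i=\vec F_i$ generically, because $\vec F_i$ is the unique $\mathcal L$-horizontal rational lift of $\vec D_i$, the automorphism $g$ preserves $\mathcal L$ by Theorem~\ref{analytic}, and $\pi\circ g=\pi$ forces $\pi_*(g_*\vec F_i)=\vec D_i$; by the uniqueness of horizontal lifts, $g_*\vec F_i=\vec F_i$.

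With (a), (b), (c) in hand, Proposition~\ref{BB} directly yields that the extension is strongly normal and produces a unique connected algebraic group $G'$ over $\mathbb C$ with $G'(\mathbb C)=\mathrm{Aut}_{\Delta_M}(\mathbb C(M)/\mathbb C(B))$. Theorem~\ref{analytic} already identifies $\mathrm{Aut}_{(B,\mathcal D)}(M,\mathcal L)$ with $G(\mathbb C)$, and the differential-automorphism identification built in step~(c) shows that this group coincides with $\mathrm{Aut}_{\Delta_M}(\mathbb C(M)/\mathbb C(B))$; the uniqueness clause of Proposition~\ref{BB} then forces $G'\cong G$. The main obstacle I foresee is precisely step~(c): making rigorous the passage from ``$G$ preserves the foliation $\mathcal L$'' to ``$G$ preserves each chosen commuting generator $\vec F_i$''. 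Once that step is settled, the remainder of the argument is essentially a translation between the geometric and the differential-algebraic languages.
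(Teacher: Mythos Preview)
Your proof is correct and follows essentially the same route as the paper: verify the three clauses of Bialynicki--Birula's criterion (Proposition~\ref{BB}) for $W = M \times_B \mathrm{Spec}\,\mathbb C(B)$, with the torsor structure coming from the splitting isomorphism/principal bundle and the differential-automorphism property from $G$-invariance of the connection. Your write-up is in fact more complete than the paper's, since you also justify the equivalence $(1)\Leftrightarrow(2)\Leftrightarrow(3)$ and spell out the uniqueness-of-horizontal-lift argument for $g_*\vec F_i=\vec F_i$, both of which the paper leaves implicit.
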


\begin{proof}
   Let us consider: $M_B = M \times_B\,{\rm Spec}\,\mathbb C(B)$ and $G_B = M \times_C\,{\rm Spec}\,\mathbb C(B)$ as $\mathbb C(B)$-varieties. The splitting isomorhism:
   $$M\times_{\mathbb C} G \xrightarrow{\sim} M \times_B M$$
   changes of basis to an isomorphism of $\mathbb C(B)$-varieties,
   $$M_B \times_{\mathbb C(B)} G_B \xrightarrow{\sim} M_B \times_{\mathbb C(B)} M_B$$
   And therefore $M_B$ is a principal homogenous space over $G_B$. The field of rational functions in $M_B$ is also $\mathbb C(M)$. For any $g\in G$ we have a field automorphism,
   $$R_g^*\colon \mathbb C(M)\to \mathbb C(M)$$
   that fixes $\mathbb C(B)$ and the derivations $\vec F_j$ in $\Delta_M$. This gives an inclusion,
   $$G\to {\rm Aut}_{\Delta}(\mathbb C(M)/\mathbb C(B)),\quad g\mapsto R_g^*$$
   and we conclude the Bialynicki-Virula Proposition \ref{BB}.
\end{proof}

\begin{remark}
The applications to differential algebra seem to go further. There have been several generalizations of differential Galois theory theory \cite{Pillay1998, CassidySinger2006} and a geometric characterization of strongly normal extensions \cite{Kovacic2003, Kovacic2006} which is very much in the flavour of Definition \ref{df:Galois_st}. We expect upcoming research clarifying how all those theories relate with the framework proposed here.
\end{remark}

\section*{Acknowledgements}
We acknowledge the support of Universidad Nacional de Colombia, Universidad de Antioquia, Colciencias project ``Estructuras lineales en geometr\'ia y topolog\'ia'' 776-2017 code  57708 (Hermes UN 38300). We thank Eduardo J. Dubuc and Yuri Poveda who gently spent some time discussing the topic with the first author during a pleasant brief encounter in Pereira, Colombia. We appreciate the help of the anonymous referees whose positive criticism allowed us to improve the quality and readability of our original text.
%%%

% List of references

% Cahiers wants the author's address at the end of the paper:

\vspace{5mm}
\noindent
David Bl\'azquez-Sanz \& \\
Juan Felipe Ruiz Castrillon \\
Facultad de Ciencias \\
Universidad Nacional de Colombia - Sede Medell\'{\i}n \\
dblazquezs@unal.edu.co \\
jfruizc@unal.edu.co

\medskip

\noindent
Carlos Alberto Mar\'{\i}n Arango \\
Instituto de Matem\'aticas \\
Universidad de Antioquia \\
calberto.marin@matematicas.udea.edu.co

\end{document}